\algrenewcommand\algorithmicrequire{\textbf{Input:}}
\algrenewcommand\algorithmicensure{\textbf{Output:}}
\newcommand{\R}{\mathbb{R}} 
\newcommand{\N}{\mathbb{N}} 
\newcommand{\de}{\mathrm{d}}
\renewcommand{\de}{\mathrm{d}} 
\newtheorem{theorem}{Theorem}[section]
\newtheorem{proposition}[theorem]{Proposition}
\newtheorem{lemma}[theorem]{Lemma}
\newtheorem{example}[theorem]{Example}
\newtheorem{remark}[theorem]{Remark}
\newtheorem{definition}[theorem]{Definition}
\newtheorem{corollary}[theorem]{Corollary}
\begin{document}

\title{On the conditioning of polynomial histopolation}

\author*[1]{\fnm{Ludovico} \sur{Bruni Bruno}}\email{ludovico.brunibruno@polito.it}

\author[2]{\fnm{Stefano} \sur{Serra-Capizzano}}\email{s.serracapizzano@uninsubria.it}

\affil*[1]{\orgdiv{Dipartimento di Scienze Matematiche \lq\lq Giuseppe Luigi Lagrange\rq\rq}, \orgname{Politecnico di Torino}, \orgaddress{\street{Corso Duca degli Abruzzi, 24}, \city{Torino}, \postcode{10129}, \country{Italia}}}

\affil[2]{\orgdiv{Dipartimento di Scienza e Alta Tecnologia}, \orgname{Universit\`a dell'Insubria}, \orgaddress{\street{Via Valleggio, 11}, \city{Como}, \postcode{22100}, \country{Italia}}}

\abstract{Histopolation is the approximation procedure that associates a degree $ d-1 $ polynomial $ p_{d-1} \in \mathscr{P}_{d-1} (I) $ with a locally integrable function $ f $ imposing that the integral (or, equivalently, the average) of $p$ coincides with that of $f$ on a collection of $ d $ distinct segments $s_i$.
	In this work we discuss unisolvence and conditioning of the associated matrices, in an asymptotic linear algebra perspective, i.e., when the matrix-size $d$ tends to infinity. While the unisolvence is a rather sparse topic, the conditioning in the unisolvent setting has a uniform behavior: as for the case of standard Vandermonde matrix-sequences with real nodes, the conditioning is inherently exponential as a function of $d$ when the monomial basis is chosen. In contrast, for an appropriate selection of supports, the Chebyshev polynomials of second kind exhibit a bounded conditioning. A linear behavior is also observed in the Frobenius norm.}

\keywords{Polynomial histopolation, conditioning, Chebyshev polynomials}


\maketitle

%



\section{Introduction}

Let $I \subset \R$ be a compact interval and let $ f : I \to \R $ be a locally integrable function. Histopolation is the approximation procedure that associates with $ f $ a degree $ d-1 $ polynomial $ p_{d-1} \in \mathscr{P}_{d-1} (I) $ such that
\begin{equation} \label{eq:histopolation}
	\int_{\alpha_i}^{\beta_i} f(x) \de x = \int_{\alpha_i}^{\beta_i} p(x) \de x ,
\end{equation}
for an appropriate collection of supports $ s_i := [\alpha_i, \beta_i] $, $ i = 1, \ldots, d $. In the literature \cite{BEfekete,Wendland}, an equivalent formulation of \eqref{eq:histopolation} containing normalization by the measure $ |s_i| $ of $ s_i $ is sometimes adopted; in what follows, we mainly stick to the non-normalized case. The main consequence of this choice is that the measures of the supports must be bounded away from zero; that is, we assume that there exists $ \varepsilon > 0 $ independent of $d$ such that $ |s_i| \geq \varepsilon $ for each $ i = 1, \ldots, d $. This is a rather natural assumption: even in the normalized case, it avoids the introduction of technical concepts such as \emph{regular} sets \cite{Robidoux}. However, we will also discuss situations where the normalization factor is present, with particular focus to limiting cases where segments collapse to points.

In contrast to the univariate nodal interpolation procedure, where any collection of $ d $ distinct points is unisolvent for polynomial interpolation in $ \mathscr P_{d-1} (I) $, such a neat recipe does not exist for histopolation, where the identification of specific classes is a hard task. It is indeed very easy to construct examples of $ d $ distinct segments that are not unisolvent for histopolation in $ \mathscr P_{d-1} (I) $. 

\begin{example}
	Consider the family of segments $[-i, i] $, for $ i = 1, \ldots, d $ with $ d \geq 1 $. Any non-vanishing linear combination of polynomials of odd degree less than or equal to $ d-1 $ satisfies
	$$ \int_{-i}^i q_{d-1} (x) \de x = 0 $$
	for $ i = 1, \ldots, d $. 
%
%
%
\end{example}

 This is only one of the many differences between nodal interpolation and histopolation. Such a dissimilarity is mostly motivated by the fact that any interval $ s_i $ requires \emph{two} parameters to be described (e.g., its endpoints $ \alpha_i $ and $ \beta_i $, or the midpoint $ \tau_i $ and the length $ |s_i| $), giving rise to a generally underdetermined problem. As a direct consequence of this, unisolvence results are confined to specific classes of segments. Many of them were identified in \cite{BEsinum}, and are:
\begin{itemize}
	\item[(C1)] chains of intervals. For $ i = 2, \ldots, d $, one has $ \alpha_i = \beta_{i-1} $; when $ \beta_i - \alpha_i = |s_i| $ is constant for $ i = 1, \ldots, d $, we call such a collection \emph{equidistributed};
		\begin{figure}[h]
		\centering	
		\begin{tikzpicture}[baseline=(current bounding box.north)]
			\coordinate (A) at (-3,0);
			\coordinate (B) at (3,0);
			\coordinate (C) at (1.5, 2.598);
			\coordinate (D) at (-1.5, 2.598);
			\coordinate (O) at (0,0);
			
			\draw[dashed] (A) -- (B);

			\coordinate (S2I) at (-2.298, 0);
			\coordinate (S2F) at (-0.521, 0);
			
			\coordinate (S1I) at (0.521, 0);
			\coordinate (S1F) at (2.298, 0);
			
			\coordinate (S3I) at (-0.521, 0);
			\coordinate (S3F) at (0.521, 0);

			\draw[thick, blue] (S1I) -- (S1F);
			\draw[thick, blue] (S2I) -- (S2F);
			\draw[thick, blue] (S3I) -- (S3F);
			
			\fill (S2I) circle[radius=1.5pt];
			\fill (S2F) circle[radius=1.5pt];
			\fill (S3F) circle[radius=1.5pt];
			\fill (S1F) circle[radius=1.5pt];
			
			\node at (-1.5,0) [anchor=south]{$s_1$};
			\node at (1.5,0) [anchor=south]{$s_3$};
			\node at (0,0) [anchor=south]{$s_2$};
			\node at (-3,0) [anchor=south east]{$a$};
			\node at (3,0) [anchor=south west]{$b$};
			
			\node at (S2I) [anchor = north]{\tiny $ \alpha_1 $};
			\node at (S2F) [anchor = north]{\tiny $ \beta_1 \equiv \alpha_2 $};
			\node at (S3F) [anchor = north]{\tiny $ \beta_2 \equiv \alpha_3 $};
			\node at (S1F) [anchor = north]{\tiny $ \beta_3 $};
		\end{tikzpicture}
		\caption{The segments $ s_1 $, $ s_2 $ and $ s_3 $ form a \emph{chain of intervals}. They are examples of non-equidistributed segments in the class (C1).} \label{fig:C1segments}
	\end{figure}
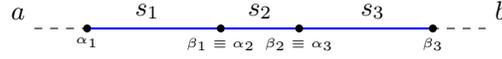
	\item[(C2)] segments with uniform arc-length. For some $ \tau_i \in (0, \pi) $ and $ 0 < \rho < \pi $, one has $ \alpha_i = \cos(\tau_i + \rho) $ and $ \beta_i = \cos(\tau_i - \rho) $. If the nodes $ \tau_i $ are the Chebyshev nodes and $ \rho = \pi/(2d)$, we refer to the corresponding collection as that of \emph{Chebyshev segments};
	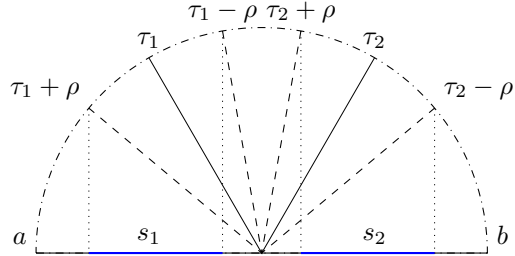
\begin{figure}[h]
		\centering	
		\begin{tikzpicture}[baseline=(current bounding box.north)]
			\draw[dashdotted] (-3,0) -- (3,0) arc(0:180:3) -- cycle;
			\coordinate (A) at (-3,0);
			\coordinate (B) at (3,0);
			\coordinate (C) at (1.5, 2.598);
			\coordinate (D) at (-1.5, 2.598);
			\coordinate (O) at (0,0);
			
			\draw (A) -- (B);
			
			\coordinate (C1) at (0.521, 2.954);
			\coordinate (C2) at (2.298, 1.928);
			
			\coordinate (D1) at (-0.521, 2.954);
			\coordinate (D2) at (-2.298, 1.928);
			\draw[ultra thin] (O) -- (C);
			\draw[ultra thin] (O) -- (D);
			
			\draw[dashed] (C1) -- (O) -- (C2);
			\draw[dashed] (D1) -- (O) -- (D2);
			
			\coordinate (S2I) at (-2.298, 0);
			\coordinate (S2F) at (-0.521, 0);
			
			\coordinate (S1I) at (0.521, 0);
			\coordinate (S1F) at (2.298, 0);
			
			\draw[dotted] (C1) -- (S1I);
			\draw[dotted] (C2) -- (S1F);
			\draw[dotted] (D2) -- (S2I);
			\draw[dotted] (D1) -- (S2F);
			
			\draw[thick, blue] (S1I) -- (S1F);
			\draw[thick, blue] (S2I) -- (S2F);
			
			\node at (-1.5,0) [anchor=south]{$s_1$};
			\node at (1.5,0) [anchor=south]{$s_2$};
			\node at (-3,0) [anchor=south east]{$a$};
			\node at (3,0) [anchor=south west]{$b$};
			\node at (1.5, 2.598) [anchor = south]{$\tau_2$};
			\node at (-1.5, 2.598) [anchor = south]{$\tau_1$};
			
			\node at (0.521, 2.954) [anchor = south]{$\tau_2 + \rho $};
			\node at (2.298, 1.928) [anchor = south west]{$\tau_2 - \rho$};
			
			\node at (-0.521, 2.954) [anchor = south]{$\tau_1 - \rho $};
			\node at (-2.298, 1.928) [anchor = south east]{$\tau_1 + \rho$};
		\end{tikzpicture}
		\caption{The segments $ s_1 $ and $ s_2 $ are the projections of arcs on the semicircle with constant arc-length $2 \rho$. They are examples of segments in the class (C2).} \label{fig:Chebsegments}
	\end{figure}
	\item[(C3)] segments with identical (left or right) endpoint. For $ i = 1, \ldots, d $, one has $ \alpha_i = \alpha $, for some $ \alpha \in I $, and $ \beta_i \ne \beta_j $ for $ i \ne j $. In the right endpoint case, for $ i = 1, \ldots, d $ one has $ \alpha_i \ne \alpha_j $ for $ i \ne j $ and $ \beta_i = \beta $  for some $ \beta \in I $.
			\begin{figure}[h]
		\centering	
		\begin{tikzpicture}[baseline=(current bounding box.north)]
			\coordinate (A) at (-3,0);
			\coordinate (B) at (3,0);
			\coordinate (C) at (1.5, 2.598);
			\coordinate (D) at (-1.5, 2.598);
			\coordinate (O) at (0,0);
			
			\draw[dashed] (A) -- (B);

			\coordinate (S2I) at (-2.298, 0);
			\coordinate (S2F) at (-0.521, 0);
			
			\coordinate (S1I) at (0.521, 0);
			\coordinate (S1F) at (2.298, 0);
			
			\coordinate (S3I) at (-0.521, 0);
			\coordinate (S3F) at (0.521, 0);

			\draw[thick, blue] (-2.298, 1) -- (2.298, 1);
			\draw[thick, blue] (S2I) -- (S2F);
			\draw[thick, blue] (-2.298, 0.5) -- (0.521, 0.5);
			
			\fill (S2I) circle[radius=1.5pt];
			\fill (S2F) circle[radius=1.5pt];
			\fill (S3F) circle[radius=1.5pt];
			\fill (S1F) circle[radius=1.5pt];
			
			\node at (-1.5,0) [anchor=south]{$s_1$};
			\node at (1.5,1) [anchor=south]{$s_3$};
			\node at (0,0.5) [anchor=south]{$s_2$};
			\node at (-3,0) [anchor=south east]{$a$};
			\node at (3,0) [anchor=south west]{$b$};
			
			\node at (S2I) [anchor = north]{\tiny $ \alpha_1 \equiv \alpha_2 \equiv \alpha_3 $};
			\node at (S2F) [anchor = north]{\tiny $ \beta_1 $};
			\node at (S3F) [anchor = north]{\tiny $ \beta_2 $};
			\node at (S1F) [anchor = north]{\tiny $ \beta_3 $};
			
			\draw[dotted] (-2.298, 0) -- (-2.298, 1);
		\end{tikzpicture}
		\caption{The segments $ s_1 $, $ s_2 $ and $ s_3 $ share the same left endpoint. They are examples of segments in the class (C3).} \label{fig:C3segments}
	\end{figure}
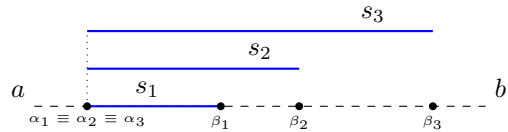
	\end{itemize}
Of course, there are families of segments that belong to more than one class. An example of this are the so-called \emph{Chebyshev} segments, that we will study in depth in Section \ref{sect:Chebyshev}. Indeed, for the particular choice of the arc-length parameter $ \rho = \pi/(2d) $ and the equidistribution of the nodes in the semicircle, $ \tau_i = \frac{(i-1/2) \pi}{d} $ for $ i = 1, \ldots, d $, they can be framed both in class (C1) and class (C2). By exploiting the normalized version of \eqref{eq:histopolation}, segments solving the Fekete problem have subsequently been recognized as an additional scenario  \cite{BEfekete}. In Section \ref{sect:classC4}, we exhibit a new class of segments which is unisolvent for polynomial histopolation. We will denote this class by (C4).

The difficulty in identifying unisolvent classes for polynomial histopolation is reflected in the lack of explicit expressions for the corresponding Lagrange bases, which are known only for the aforementioned classes (C1) \cite{GerritsmaEdge,Robidoux} and (C3) \cite{BEsinum}. The absence of closed formulas for Lagrange polynomials forces to deal with solving a linear system that involves the Vandermonde matrix $ V_d $, which is hence computed with respect to any other basis $ b_1, \ldots, b_{d} $ of $ \mathscr P_{d-1} (I) $:
\begin{equation} \label{eq:Vandermonde}
	\left[ V_d \right] _{i,j} := \int_{s_i} b_j (x) \de x .
\end{equation}
To stress the connection of the results with 
the number $ d $ of segments, 
we consistently index the Vandermonde matrix $ V_d $, as in \eqref{eq:Vandermonde}. This quantity is also the size of the matrix, and is upshifted by $1$ with respect to the relative polynomial degree.

\begin{example}
	The non-unisolvence of the set of segments introduced in the above example is also readily seen by looking at the Vandermonde matrix computed with respect to the monomial basis $ b_j (x)= x^{j-1} $. For any even $ j $, one has
		$$ \left[ V_d \right]_{i,j} = \int_{-i}^i x^{j-1} \de x = \frac{1}{j} \left[ x^j \right]_{-i}^i = \frac{1}{j} \left(i^j - i^j \right) = 0 .$$
	It follows that any even row of $ V_d $ is equal to the zero vector. Since the invertibility of the Vandermonde matrix does not depend on the polynomial basis chosen, one deduces that such a set is not unisolvent for histopolation in $ \mathscr{P}_{d-1} (I) $.
\end{example}

The problem of computing the features of a Vandermonde matrix, in particular concerning its conditioning and hence the identification of ill-conditioned bases, is a cornerstone of approximation theory \cite{Shen}; of course, so is also the opposite problem. In the framework of nodal interpolation the problem was extensively studied, and several results concerning optimality \cite{Gautschi1975, Gautschi2011, Lisimax}, bad conditioning \cite{BeckermannConditioning,BSSC}, displacement structures \cite{Displacement}, as well as their applications \cite{Li}, have been obtained, endorsing the design of efficient algorithms. The majority of these results can be grouped into two clusters: results concerning the monomial basis, which in general offers unstable performances, and results concerning the Chebyshev (of \emph{first} kind) basis, which can be used in combination with Chebyshev nodes to reverse such a situation.

\par \noindent \textbf{Outline of the paper and main results.} In contrast to nodal interpolation, very little is known about the conditioning of the Vandermonde matrix \eqref{eq:Vandermonde} associated with the histopolation problem, and about its asymptotics as a function of the polynomial degree $d$. The principal aim of this work is to fill this gap.

To do so, we dedicate Section \ref{sec:illcond} to ill-conditioning. We recognize that the (Euclidean) conditioning $ \kappa_2 $ of the matrix $ V_d $ grows exponentially with $ d $ when the monomial basis is chosen. This is proved first in the reference interval $ I = [-1,1] $ and then extended to any collection of segments by Theorem \ref{th:cheby tool3}. This feature recalls the nodal case, but is not a consequence of the mean value theorem, as one could expect. To broaden the range of applicability of the result, we also identify a new class of unisolvent segments, and use it to produce numerical evidences.

To restore hope in histopolation, in Section \ref{sect:Chebyshev} we present results concerning well-conditioning. Indeed, we show that Chebyshev polynomials of the \emph{second} kind, in combination with segments in the class (C2), give rise to a column-orthogonal matrix. The diagonal terms of $ V_d^\top V_d $ are then explicitly determined in Proposition \ref{prop:orthogonality}. As a consequence, in Theorem \ref{thm:asymptcond} we prove that the (Euclidean) conditioning $ \kappa_2 (V_d) $ is bounded, and in Theorem \ref{thm:condFrobenius} that the (Frobenius) conditioning $ \kappa_F (V_d) $ is linear. Both these results are provided with simple formulas that depend on $ \rho $.

\section{Ill-conditioning: the monomial basis} \label{sec:illcond}

The principal aim of this section is to prove that the Euclidean conditioning
$$ \kappa_2 \left( V_d \right) = \Vert V_d \Vert_2 \Vert V_d^{-1} \Vert_2 = \frac{\sigma_1 \left(V_d \right)}{\sigma_d \left(V_d \right)} = \sqrt \frac{\lambda_1 \left(V_d^\top V_d \right)}{\lambda_d \left( V_d ^\top V_d \right)} $$
of the Vandermonde matrix $ V_d $ defined in \eqref{eq:Vandermonde} is exponential when the monomial basis $ b_j (x) := x^{j-1} $ is chosen. In the above formula, as customary, $ \sigma_j (V_d) $ is the $j$-th singular value $\sigma_j^2(V_d)=\lambda_j \left(V_d^\top V_d \right) $ of $ V_d $, and the corresponding sequence
$$ \sigma_1(V_d)\ge \sigma_2(V_d)\ge\cdots \ge \sigma_d (V_d) $$
is non-increasingly ordered with $ j = 1, \ldots, d $. We will frequently set $ W_d := V_d^\top V_d $, and use both the definition and the characterization of $ \kappa_2 (V_d) $ given above.

As we shall see, the exponential conditioning is intrinsic to the monomial basis, and holds independently of the underlying collection of unisolvent segments. We build such a general result by steps, confining first ourselves in a reference interval. 
To fix the ideas we start with a motivating example concerning the class (C3), which yields very neat computations, and whose features turn out to be quite general.

\subsection{A motivating example} \label{ssec:example}

Consider $ s_i = [0, i] $, for $ i = 1, \ldots, d $, and the monomial basis. The corresponding Vandermonde matrix is
$$ \left[ V_d \right] _{i,j} = \int_{0}^i x^{j-1} \de x = \frac{i^j}{j} , $$
for which one readily computes that
$$ \left[W_d\right]_{i,j} := \left[ V_d^\top V_d \right]_{i,j} = \frac{1}{i \cdot j} \sum_{k=1}^d k^{i+j} .$$
By the well-known properties of the Raylegh quotient, for any $ x \in \R^d $ one has 
$$ \lambda_d \left( W_d  \right) \leq \frac{x^\top W_d \,x }{x^\top x} \leq \lambda_1 \left( W_d \right) .$$
By considering $ x = e_1 $, the first element of the canonical basis, one immediately gets that
$$ \lambda_d \left(W_d \right) \leq \left[W_d \right]_{1,1} = \sum_{k=1}^d k^2 = \frac{d (d+1) (2d+1)}{6} $$
and likewise, taking $ x = e_d $, one also has that
$$ \lambda_1 \left(W_d \right) \geq \left[W_d \right]_{d,d} = \frac{1}{d^2} \sum_{k=1}^d k^{2d} > d^{2d-2} .$$
It follows that, for this specific case, the conditioning of $ V_d $ is exponential with respect to the polynomial degree $d$. By the linearity of the integral, a very similar result can be obtained for the case (C1) when segments $ s_i = [i-1,i] $ are considered. These facts are pictured in Figure \ref{fig:condexample}. In what follows, we will sharpen these estimates, and remove the specific assumption on the supports.

\begin{figure}[!h]
	\centering
	\includegraphics[width=0.49\textwidth]{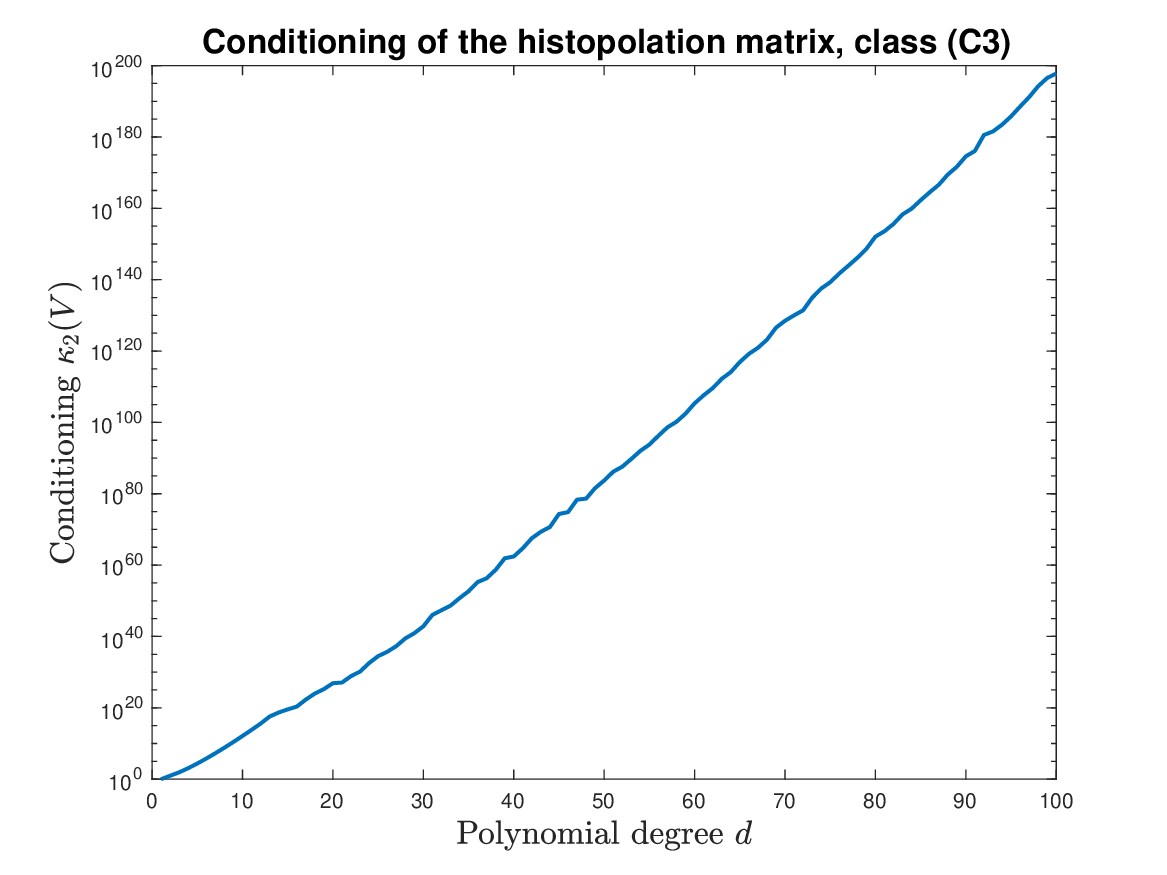}
	\includegraphics[width=0.49\textwidth]{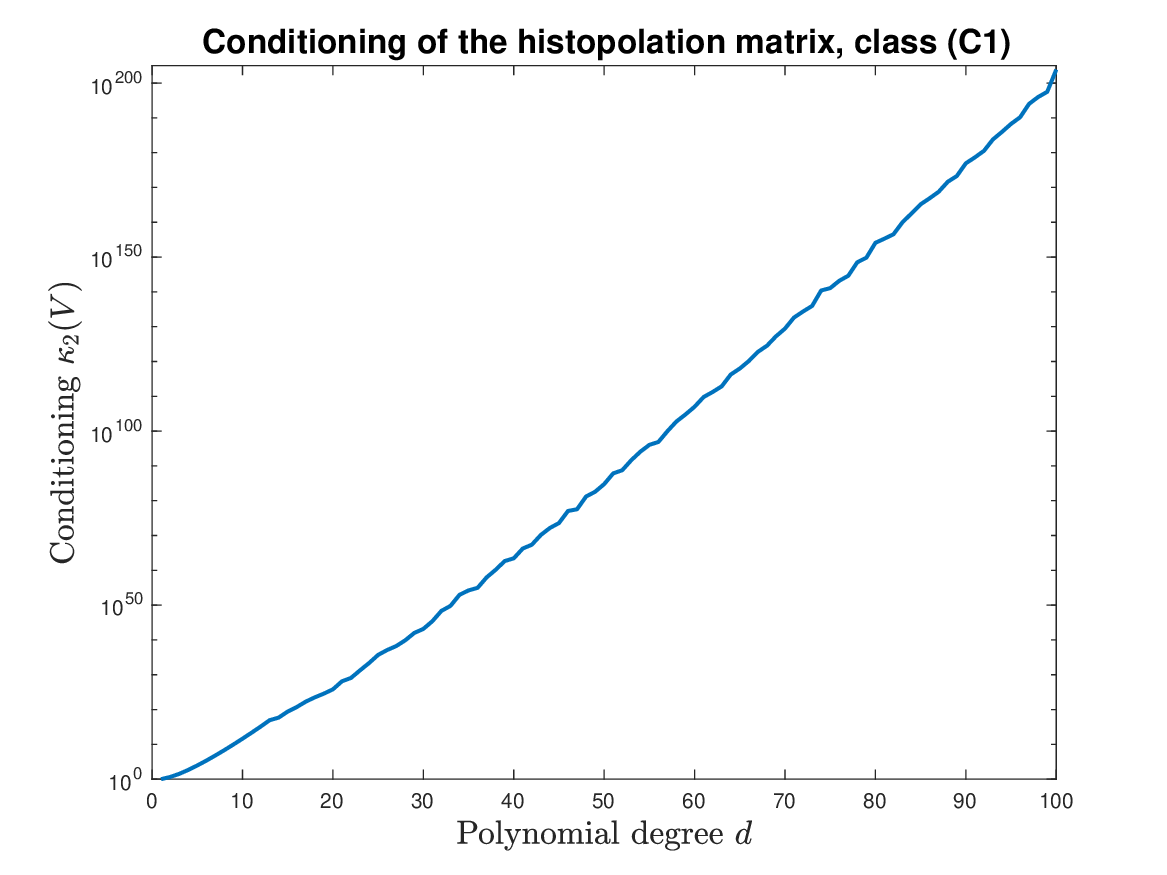}
	\caption{Exponential conditioning of the Vandermonde matrix in the monomial basis for classes (C3) and (C1), left and right hand panel respectively.} \label{fig:condexample}
\end{figure}

\begin{remark} \label{rem:reducing}
Suppose that there exists $ c > 0 $ such that $\sup_i \left| \beta_i -\alpha_i \right| \geq c$. When there exists $v>1$ independent of $d$ such that $\sup_{i,j}\max\{|\beta_i|, |\alpha_j|\}\ge v $, by direct inspection as above we have necessarily exponential growth for the maximal eigenvalue of $W_d =V_d^\top V_d$, with its minimal eigenvalue that cannot grow exponentially with respect to $d$. Hence, the latter gives a clear evidence of an exponential conditioning of $ W_d $ and a fortiori of $ V_d $ as a function of the size $d$, whenever $c>0$, $v>1$ are independent of the polynomial degree $d$. In what follows, we shall thus make a similar regularity assumption on $ c $, and discuss when it can be weakened.
\end{remark}

Observe that dividing by the size $ |s_d| = d $ of the longest support $ s_d $ one translates the problem into a fixed reference interval. Applying the change of variable $ t = x/d $, one immediately sees that the $j$-th column of $ V_d $ scales by the factor $ 1/d^j $ and, as a consequence, the $(i,j)$-th term of $ W_d $ scales by $ 1/d^{i+j} $. This shows that the conditioning of $ V_d $ depends on the length of the segments and, even if it is easy to numerically check that the conditioning in this situation is still exponential, this complicates the estimate on $ \lambda_1 $ given above, suggesting that the reference interval $ I = [-1,1] $ is somewhat a limiting case of the problem. This motivates the particular emphasis we put on this situation.

\subsubsection{A formula for the determinant for class (C3)}
The collection of segments used in Section \ref{ssec:example} is an instance of class (C3). We now extend \cite[Eq. (24)]{BEfekete} and obtain an easy formula for the determinant of $ V_d $ for the class (C3) when the left endpoint is $ \alpha_i = \alpha $. The special case $ \alpha = 0 $ clearly applies to the motivating example.

\begin{lemma} \label{lem:detVd}
	Let $ s_i = [\alpha, \beta_i] $, with $ \alpha < \beta_i < \beta_j $ for $ 1 \leq i < j \leq d $. The determinant of $ V_d $ is positive, and
	\begin{equation} \label{eq:detVd}
		\det V_d = \frac{1}{d !} \left(\prod_{i=1}^d \left( \beta_i -\alpha\right) \right) \left(\prod_{1 \leq j < k \leq d} \left(\beta_k - \beta_j \right)\right) .
	\end{equation}
\end{lemma}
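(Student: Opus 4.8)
The plan is to compute $\det V_d$ directly from the definition $[V_d]_{i,j}=\int_\alpha^{\beta_i} x^{j-1}\,\de x = (\beta_i^j-\alpha^j)/j$, and to massage the resulting determinant into a Vandermonde-type form. First I would factor the scalar $1/j$ out of column $j$, which produces the factor $\prod_{j=1}^d 1/j = 1/d!$ in front, leaving the matrix $M$ with entries $M_{i,j}=\beta_i^j-\alpha^j$. The key observation is that each entry depends on the row only through $\beta_i$, so $M_{i,j}=g_j(\beta_i)$ where $g_j(t)=t^j-\alpha^j$. Since $\det$ is invariant under replacing the column functions $g_j$ by any fixed invertible linear combination of $g_1,\dots,g_j$, I can triangularize: the span of $\{g_1,\dots,g_d\}$ is the same as the span of $\{t-\alpha,\, t^2-\alpha^2,\dots\}$, and each $g_j$ is divisible by $(t-\alpha)$.

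Concretely, I would proceed in two stages. Stage one: from $M_{i,j}=\beta_i^j-\alpha^j$, perform column operations (subtract suitable multiples of earlier columns) to reduce column $j$ to the single term $(\beta_i-\alpha)\beta_i^{j-1}$ — equivalently, note that $t^j-\alpha^j = (t-\alpha)(t^{j-1}+\alpha t^{j-2}+\cdots+\alpha^{j-1})$, and that after unimodular column operations the $j$-th column function can be taken to be $(t-\alpha)t^{j-1}$. This shows $\det M = \det\big[(\beta_i-\alpha)\beta_i^{j-1}\big]_{i,j}$. Stage two: factor $(\beta_i-\alpha)$ out of row $i$, giving $\prod_{i=1}^d(\beta_i-\alpha)$ times the classical Vandermonde determinant $\det[\beta_i^{j-1}]_{i,j} = \prod_{1\le j<k\le d}(\beta_k-\beta_j)$. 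Combining with the $1/d!$ from the very first step yields \eqref{eq:detVd}. Positivity is then immediate from the ordering $\alpha<\beta_1<\cdots<\beta_d$: every factor $\beta_i-\alpha$ is positive and every factor $\beta_k-\beta_j$ with $j<k$ is positive, so $\det V_d>0$.

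I do not expect a genuine obstacle here; the only point requiring a little care is justifying that the column operations needed to pass from the functions $\{t^j-\alpha^j\}$ to $\{(t-\alpha)t^{j-1}\}$ form a unimodular (determinant-one) transformation — this is clear because both families are obtained from $\{1,t,\dots,t^{d-1}\}$ by lower-triangular changes of basis with $1$'s implicitly tracked, but one should phrase it cleanly. An alternative, perhaps cleaner, route is to expand $\prod_{i}(\beta_i-\alpha)\prod_{j<k}(\beta_k-\beta_j) = \det\big[(\beta_i-\alpha)\beta_i^{j-1}\big]$ and then recognize this last determinant as $\det[\beta_i^j - \alpha\beta_i^{j-1}]$; since adding $\alpha$ times column $j-1$ to column $j$ (working downward) does not change the determinant, this equals $\det[\beta_i^j - \alpha^j]$ up to the telescoping already noted, reversing the argument. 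Either way the computation is short. The genuinely informative content — already flagged in the surrounding text — is that this closed form makes the exponential blow-up of $\sigma_1/\sigma_d$ transparent once one plugs in the specific $\beta_i$ of the motivating example.
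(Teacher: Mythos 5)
Your proposal is correct and follows essentially the same route as the paper: factor $1/j$ out of each column to produce the $1/d!$, extract the row factors $(\beta_i-\alpha)$, and reduce the remaining matrix to the classical Vandermonde matrix in the $\beta_i$ by unimodular column operations (the paper performs the row extraction before the column operations, via the identity $\beta_i^j-\alpha^j=(\beta_i-\alpha)\sum_{k=0}^{j-1}\alpha^k\beta_i^{j-1-k}$, but this is an immaterial reordering). The positivity argument from the ordering $\alpha<\beta_1<\cdots<\beta_d$ is likewise the same.
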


\begin{proof}
	We establish formula \eqref{eq:detVd}, the claim on the positivity will be a trivial consequence of the ordering $ \alpha < \beta_i < \beta_j $ for $ 1 \leq i < j \leq d $. We compute
	$$ [V_d]_{i,j} = \int_{\alpha}^{\beta_i} x^{j-1} \de x = \frac{1}{j} \left( \beta_i^j - \alpha^j \right) ,$$
	with $ j = 1, \ldots, d $. Then $ V_d $ factorizes as $ U_d D_d $, where $ D_d $ is the diagonal matrix $ [D_d]_{j,j} = j^{-1} $ and $ U_d $ is the shifted Vandermonde matrix whose $ (i,j)$-th element is
	$$ [U_d]_{i,j} = ( \beta_i^j - \alpha^j) = (\beta_i - \alpha) \sum_{k=0}^{j-1} \alpha^k \beta_i^{j-1-k} .$$
	Gathering the term $ (\beta_i - \alpha) $ row-wise in $ U_d $, we may write $ U_d = B_d M_d $, where $ B_d $ is the diagonal matrix with entries $ [B_d]_{i,i} = (\beta_i - \alpha) $ and
	$$ \left[ M_d \right]_{i,j} := \sum_{k=0}^{j-1} \alpha^k \beta_i^{j-1-k} .$$
	By subtracting $ \alpha $ times the $j$-th column from the $ (j+1) $-th in $ M_d $, for $ j = d-1, \ldots, 1 $, one finds that
	$$ \det M_d = \det \left( \left[ \beta_i^{j-1} \right]_{i,j=1}^{d} \right)$$
	The matrix appearing at the right hand side of the above equation is the Vandermonde matrix associated with the nodes $ \{ \beta_1, \ldots, \beta_d \} $, whose determinant is well-known to be $ \prod_{1 \leq j < k \leq d} \left(\beta_k - \beta_j \right) $. The claim now follows immediately from Binet's Theorem.
\end{proof}

The formula \eqref{eq:detVd} assumes the very neat form $ \det (V_d) = \prod_{i=1}^{d-1} i ! $ under the assumptions of Section \ref{ssec:example}. This is readily seen plugging $ \alpha = 0 $ and $ \beta_i = i $ in \eqref{eq:detVd}. Lemma \ref{lem:detVd} also gives an alternative, more direct proof of the unisolvence of the class (C3) proposed in \cite{BEsinum}. Obviously, one may retrace the same proof to obtain an analogous formula by fixing the right endpoint $ \beta_i = \beta $.

\begin{corollary} \label{cor:detVd}
	Let $ s_i = [\alpha_i, \beta] $, with $ \alpha_i < \alpha_j < \beta $ for $ 1 \leq i < j \leq d $. The determinant of $ V_d $ is positive, and
	\begin{equation*} \label{eq:detVd2}
		\det V_d = \frac{1}{d !} \left(\prod_{i=1}^d \left( \beta -\alpha_i \right) \right) \left(\prod_{1 \leq j < k \leq d} \left(\alpha_k - \alpha_j \right)\right) .
	\end{equation*}
\end{corollary}

\subsubsection{An application to the Fekete problem}

The Fekete problem has been celebrated in interpolation theory since its definition \cite{Fekete}. Solutions to \emph{nodal} Fekete problems emerge in uniform approximation, as the corresponding Lagrange bases have small norm. As a consequence, the relative Lebesgue constant is small, offering good numerical stability. This holds for the \emph{segmental} Fekete problem as well \cite{BEfekete}, provided that one considers the \emph{normalized} Fekete problem: find (possibly degenerate) segments $ \{ s_1, \ldots, s_d \} $ such that
\begin{equation} \label{eq:normVanderm}
	\left| \det \left(\left[ \frac{1}{|s_i|} \int_{s_i} b_j (x) \de x \right]_{i,j}\right) \right|
\end{equation}
is maximized. Clearly, such a procedure does not depend on the polynomial basis chosen \cite{Bos}, and this fact is extensively used in \cite{BEfekete} to obtain explicit solutions for the classes (C1) and (C2). Owing to this and \eqref{eq:detVd}, we are in a position to provide a solution for the class (C3).

\begin{proposition}
	Fix $ \alpha \in I $, and let $ s_i := [\alpha, \beta_i] $. Segments $ \{ s_1, \ldots, s_d \} $ that maximize \eqref{eq:normVanderm} are such that $ \{ \beta_1, \ldots, \beta_d \} $ are the Fekete nodes of degree $ d $.
\end{proposition}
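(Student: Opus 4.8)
The plan is to reduce the segmental Fekete problem for class (C3) to the classical nodal Fekete problem via the determinant formula in Lemma~\ref{lem:detVd}. First I would observe that, since the quantity \eqref{eq:normVanderm} is independent of the chosen polynomial basis, we may compute it using the monomial basis $b_j(x) = x^{j-1}$. With $s_i = [\alpha,\beta_i]$, the normalization factor is $|s_i| = \beta_i - \alpha$ (assuming without loss of generality the ordering $\alpha < \beta_1 < \cdots < \beta_d$, which we may always impose by relabeling, and noting the case of a degenerate segment collapsing to $\alpha$ can be handled by the limiting argument alluded to in the introduction), so the normalized Vandermonde matrix is exactly $B_d^{-1} V_d$ in the notation of the proof of Lemma~\ref{lem:detVd}. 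Hence its determinant is $\det(V_d) / \prod_{i=1}^d (\beta_i - \alpha)$, and invoking \eqref{eq:detVd} the product of the segment lengths cancels, leaving
\begin{equation} \label{eq:C3normdet}
	\left| \det \left(\left[ \frac{1}{|s_i|} \int_{s_i} x^{j-1}\, \de x \right]_{i,j}\right) \right| = \frac{1}{d!} \prod_{1 \leq j < k \leq d} \left| \beta_k - \beta_j \right| .
\end{equation}

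Next I would note that the right-hand side of \eqref{eq:C3normdet} depends only on the points $\{\beta_1,\ldots,\beta_d\}$ and, up to the fixed multiplicative constant $1/d!$, is precisely the absolute value of the classical Vandermonde determinant $\prod_{j<k}|\beta_k - \beta_j|$ associated with those nodes. Maximizing \eqref{eq:normVanderm} over admissible choices of $s_i = [\alpha,\beta_i]$ with $\beta_i \in I$ is therefore equivalent to maximizing $\prod_{j<k}|\beta_k - \beta_j|$ over all choices of $d$ points $\beta_i \in I$. By definition, the maximizers of the latter are exactly the Fekete nodes of degree $d$ on $I$. One subtlety to address: we must check that the requirement $\beta_i \neq \alpha$ (so that $|s_i| > 0$) does not obstruct attainment of the maximum — but the Fekete nodes of degree $d$ are $d$ distinct points, so at most one of them can coincide with $\alpha$, and if one does, the corresponding segment is degenerate and the result holds in the limiting sense discussed in the introduction; generically $\alpha$ is not a Fekete node and no issue arises.

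The main obstacle, such as it is, is bookkeeping rather than substance: one must be careful that the maximization in \eqref{eq:normVanderm} is indeed over the correct feasible set (segments of the prescribed form with a common left endpoint $\alpha$) and that the basis-independence invoked from \cite{Bos} is applied correctly, since \eqref{eq:normVanderm} as written already fixes a basis $b_j$ but its maximizer does not depend on that choice. Once these points are settled, the conclusion is immediate from \eqref{eq:C3normdet}. An analogous statement holds for the right-endpoint version of (C3) by the obvious reflection $x \mapsto -x$, which maps Fekete nodes to Fekete nodes; I would mention this in a closing remark rather than belabor it.
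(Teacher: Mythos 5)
Your argument is correct and follows essentially the same route as the paper: compute \eqref{eq:normVanderm} in the monomial basis, cancel the normalization factors $\prod_i(\beta_i-\alpha)$ against the corresponding product in \eqref{eq:detVd}, and reduce to maximizing $\frac{1}{d!}\prod_{j<k}|\beta_k-\beta_j|$, which is the classical Fekete problem; the degenerate case $\beta_1=\alpha$ is handled by the same continuity argument the paper invokes. Your extra remarks on basis-independence and the right-endpoint reflection are sound but not needed beyond what the paper already does.
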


\begin{proof}
	For notational simplicity, let us assume that $ \alpha < \beta_1 < \ldots < \beta_d $. Call $ \widetilde{V}_d $ the normalized Vandermonde matrix of \eqref{eq:normVanderm}, and observe that
	$$ \det \widetilde{V}_d = \prod_{i=1}^d \frac{1}{|s_i|} \cdot \det V_d = \frac{1}{\prod_{i=1}^d \left( \beta_i - \alpha \right) } \det V_d .$$
	 Consider the monomial basis $ b_j (x) = x^{j-1} $. By \eqref{eq:detVd}, one has
	 \begin{align*} \det \widetilde{V}_d & = \frac{1}{\prod_{i=1}^d \left( \beta_i - \alpha \right) } \det V_d = \frac{1}{\prod_{i=1}^d \left( \beta_i - \alpha \right) } \frac{1}{d !} \left(\prod_{i=1}^d \left( \beta_i -\alpha\right) \right) \left(\prod_{1 \leq j < k \leq d} \left(\beta_k - \beta_j \right)\right) 
	 	\\ & = \frac{1}{d !} \prod_{1 \leq j < k \leq d} \left(\beta_k - \beta_j \right),
	 \end{align*}
	 where we also removed the absolute value of \eqref{eq:normVanderm} due to the positivity result given in Lemma \ref{lem:detVd}. Since $ \frac{1}{d!} $ is a constant term, $\det \widetilde{V}_d$ is maximized by nodes $ \{ \beta_1, \ldots, \beta_d \} $ that maximize the rightmost term of the above equality. Such nodes are well-known to be the Fekete points, which is the claim. Notice that the case $ \alpha = \beta_1 $, which represents the first segment shrinking to the point $ \alpha $, also follows by an immediate continuity argument.
\end{proof}

\subsection{A conditioning result for the reference interval} \label{ssec:main}

To extend the computations of Section \ref{ssec:example}, we start from the situation where the reference interval is $ I = [-1,1] $. This framework offers a neat handling due to the well-known features of Chebyshev polynomials \cite{ChebPoly}.

\begin{theorem}\label{th:cheby tool2}
	Let $ -1 \leq \alpha_i < \beta_i \leq 1 $ for $ i = 1, \ldots, d $, and assume that the corresponding collection of segments is unisolvent for histopolation in $ \mathscr{P}_{d-1} (I) $. Suppose that there exists $ c $ independent of $ d $ such that $ \max_{i} \left( \beta_i - \alpha_i \right) \geq c > 0$. Then
	$$ \kappa_2 \left( V_d \right) \geq \frac{c}{\sqrt{d}} 2^{d-2} .$$
\end{theorem}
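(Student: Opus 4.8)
The plan is to bound the two extreme singular values of $V_d$ separately and combine them via $\kappa_2(V_d) = \sigma_1(V_d)/\sigma_d(V_d)$, which is legitimate since unisolvence makes $V_d$ invertible. For the largest singular value I would use that $\sigma_1(V_d) = \|V_d\|_2 = \|V_d^\top\|_2$ dominates the Euclidean norm of any row of $V_d$, which in turn dominates the absolute value of any single entry. Choosing $i_0$ with $\beta_{i_0} - \alpha_{i_0} = \max_i(\beta_i - \alpha_i) \geq c$ and looking at the first column (the basis element $b_1 \equiv 1$), the entry $[V_d]_{i_0,1} = \int_{s_{i_0}} 1\,\de x = \beta_{i_0} - \alpha_{i_0}$ immediately gives $\sigma_1(V_d) \geq c$.

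For the smallest singular value I would use a Rayleigh-quotient test vector in the spirit of Section~\ref{ssec:example}, but now taking $\mathbf t = (t_1,\dots,t_d)^\top$ to be the vector of monomial coefficients of the Chebyshev polynomial of the first kind, $T_{d-1}(x) = \sum_{j=1}^d t_j x^{j-1}$. Two facts are needed. First, the leading coefficient of $T_{d-1}$ equals $2^{d-2}$ for $d \geq 2$, so $\|\mathbf t\|_2 \geq |t_d| = 2^{d-2}$. Second, the $i$-th component of $V_d \mathbf t$ is exactly $\int_{s_i} T_{d-1}(x)\,\de x$, whence $\|V_d \mathbf t\|_2 \leq \sqrt d\,\|V_d \mathbf t\|_\infty = \sqrt d \max_i \left| \int_{s_i} T_{d-1}(x)\,\de x\right|$. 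Granting the uniform estimate $\left|\int_s T_{d-1}(x)\,\de x\right| \leq 1$ for every subinterval $s \subseteq [-1,1]$, one obtains $\sigma_d(V_d) = \min_{x\neq 0}\|V_d x\|_2/\|x\|_2 \leq \|V_d\mathbf t\|_2/\|\mathbf t\|_2 \leq \sqrt d\,/2^{d-2}$, and combining with $\sigma_1(V_d)\geq c$ yields $\kappa_2(V_d) \geq c\,2^{d-2}/\sqrt d$. The case $d = 1$ is trivial, since $\kappa_2(V_1) = 1$ and $c \leq \beta_1-\alpha_1 \leq 2$.

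The main obstacle is exactly the estimate $\left|\int_s T_{d-1}(x)\,\de x\right| \leq 1$, uniformly over all $s = [\alpha,\beta] \subseteq [-1,1]$. I would prove it by the substitution $x = \cos\theta$, under which $\de x = -\sin\theta\,\de\theta$ and the integrand becomes $\cos((d-1)\theta)\sin\theta = \tfrac12\left(\sin(d\theta) - \sin((d-2)\theta)\right)$ for $d \geq 3$; integrating the elementary antiderivative over the corresponding arc bounds the integral in absolute value by $\tfrac1d + \tfrac1{d-2}$, which is $\leq \tfrac34 < 1$ for every $d \geq 4$. The two small cases are checked directly: for $d = 2$ one has $\left|\int_\alpha^\beta x\,\de x\right| = \tfrac12|\beta^2-\alpha^2| \leq \tfrac12$, and for $d = 3$, with $F(x) = \tfrac{2x^3}{3} - x$ an antiderivative of $T_2$, the total variation $\max_{[-1,1]}F - \min_{[-1,1]}F = \tfrac{2\sqrt2}{3} < 1$. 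I note that one could instead absorb this step into the cruder bounds $\|T_{d-1}\|_{\infty,[-1,1]} = 1$ and $|s_i| \leq 2$, which still give $\kappa_2(V_d) \geq \tfrac{c}{\sqrt d}\,2^{d-3}$; it is precisely the sharper arc estimate that produces the stated exponent $2^{d-2}$.
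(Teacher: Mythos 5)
Your proof is correct and follows essentially the same route as the paper: a single matrix entry, namely $[V_d]_{i_0,1}=\int_{s_{i_0}}1\,\mathrm{d}x=\beta_{i_0}-\alpha_{i_0}\geq c$, bounds $\sigma_1(V_d)$ from below, and the coefficient vector of the degree-$(d-1)$ Chebyshev polynomial serves as the Rayleigh test vector for $\sigma_d(V_d)$. The only substantive difference is that you replace the paper's H\"older step $\bigl|\int_{s_i}p^*_{d-1}\bigr|\leq|s_i|\,\Vert p^*_{d-1}\Vert_\infty$ by the exact arc-integral estimate $\bigl|\int_s T_{d-1}\bigr|\leq 1$, and this refinement is actually worth keeping: the sup norm of the \emph{monic} Chebyshev polynomial of degree $d-1$ on $[-1,1]$ is $2^{2-d}$ rather than the value $2^{1-d}$ used in the paper's proof, so the H\"older route by itself only yields $\kappa_2(V_d)\geq \frac{c}{\sqrt d}\,2^{d-3}$, whereas your sharper estimate honestly delivers the stated constant $2^{d-2}$.
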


\begin{proof}
	By \eqref{eq:Vandermonde}, we immediately get that
	$$ \left[V_d \right]_{i,j} = \frac{\beta_i^j-\alpha_i^{j}}{j} .$$
	Since all such terms are positive, we can trivially lower bound $ \Vert V_d \Vert_2 $ with any of the terms of $ V_d $. Hence
	$$ \Vert V_d \Vert_2 \geq \max_{1 \leq i,j \leq d} [V_d]_{i,j} \geq \max_{1 \leq i \leq d} [V_d]_{i,1} = \max_{1 \leq i \leq d} (\beta_i - \alpha_i) \geq c ,$$
	for a $ c $ independent of the polynomial degree (hence of the matrix size) $ d $. Now, in order to estimate from below the conditioning of $ V_d $ in the Euclidean norm, we look for a proper estimation from below of $ \Vert V_d^{-1} \Vert_2 $. In particular, observe that for any non-zero $ y \in \R^d $ one has
	\begin{equation} \label{eq:useful}
		\left\Vert V_d^{-1} \right\Vert_2 = \max_{\substack{z \in \R^d \\ z \ne 0}} \frac{\Vert V_d ^{-1} z \Vert_2}{\Vert z \Vert_2} \geq \frac{\Vert y \Vert_2}{\left\Vert V_d y \right\Vert_2} .
	\end{equation}
	It follows that, if we are able to select an appropriate $ y \in \R^d $, we obtain a satisfactory estimate for $ \Vert V_d^{-1} \Vert_2 $. To this end, we take $ y^\top = (a_1, \ldots, a_d) $ as the vector of coefficients of the normalized Chebyshev polynomial of the first kind and degree $ d - 1 $,
    $$ p_{d-1}^* (x) := \frac{T_{d-1} (x)}{2^{d-1}} .$$
    Due to the normalization, $ p_{d-1}^* (x) = \sum_{j=1}^d a_j x^{j-1} $ is a monic polynomial, so $ a_d = 1 $, and this implies that $ \Vert y \Vert_2 \geq | a_d | = 1 $. Further, by construction, one has
	$$ [V_d y]_{i} = \int_{\alpha_i}^{\beta_i} p^*_{d-1} (x) \de x .$$
	Since $ \Vert p_{d-1}^* \Vert_\infty = 2^{1-d} $, by the H\"older inequality one immediately gets
	$$ \left| [V_d y]_i \right| = \left| \int_{\alpha_i}^{\beta_i} p_{d-1}^* (x) \de x \right| \leq (\beta_i - \alpha_i) \Vert p_{d-1}^* \Vert_\infty \leq (\beta_i-\alpha_i) 2^{1-d} . $$
	Therefore
	$$ \Vert V_d y \Vert_2 \leq \max_{1 \leq i \leq d} (\beta_i-\alpha_i) \frac{\sqrt{d}}{2^{d-1}} \leq 2 \frac{\sqrt{d}}{2^{d-1}} = \frac{\sqrt{d}}{2^{d-2}} .$$
	We may hence estimate \eqref{eq:useful} from below by $ \Vert V_d^{-1} \Vert_2 \geq 1/\Vert V_d y \Vert_2 \geq 2^{d-2} / \sqrt{d} $. Gathering all the results, we have
	$$ \kappa_2 \left( V_d \right) = \Vert V_d \Vert_2 \Vert V_d^{-1} \Vert_2 \geq \frac{c}{\sqrt{d}} 2^{d-2} .$$
	Since $ c $ does not depend on the degree $ d $, the claim is proved.
\end{proof}

Theorem \ref{th:cheby tool2} applies to the class of segments (C1) where $ 1 < \beta_i = \alpha_{i+1} < 1 $ for $ i = 1, \ldots, d-1 $. It contains also the class of segments (C3) for $ 1 \leq \alpha = \alpha_i $ for $ i = 1, \ldots, d $ and $ \beta_i \ne \beta_j $ for $ i \ne j $. Even if confined to the reference element $I = [-1, 1] $, Theorem \ref{th:cheby tool2} proves that the trend depicted in Figure \ref{fig:condexample} is not peculiar to the selection of segments performed in the motivating Section \ref{ssec:example}. The subsequent Theorem \ref{th:cheby tool3} will in fact demonstrate that this is intrinsic to the monomial basis.

\subsubsection{The normalized case}

In the determination of Fekete segments, the presence of a normalization factor has proven to play an important role \cite{BEfekete}, giving a hybrid situation between the limit where all segments shrink to points and the technical request $ \sup_{i} (\beta_i - \alpha_i) \geq c > 0 $. From the perspective of the conditioning of the associated Vandermonde matrix
$$ [\widetilde V_d]_{i,j} := \frac{1}{\beta_i - \alpha_i} \int_{\alpha_i}^{\beta_i} x^{j-1} \de x $$
this does not mark a relevant difference, as the next result shows.

\begin{corollary} \label{cor:normalizedC1}
	Let $ -1 \leq \alpha_i < \beta_i \leq 1 $ for $ i = 1, \ldots, d $, and assume that the corresponding collection of segments is unisolvent for histopolation in $ \mathscr{P}_{d-1} (I) $. Then
	$$ \kappa_2 \left( \widetilde V_d \right) \geq \frac{2^{d-1}}{\sqrt{d}} .$$    
\end{corollary}

\begin{proof}
    Following exactly the same lines of the proof of Theorem \ref{th:cheby tool2}, we have
    $$ \Vert \widetilde V_d \Vert_2 \geq \max_{1 \leq i,j \leq d} [\widetilde V_d]_{i,j} \geq \max_{1 \leq i \leq d} [\widetilde V_d]_{i,1} = \max_{1 \leq i \leq d} \frac{\beta_i - \alpha_i}{\beta_i - \alpha_i} =1 .$$
    Likewise, the estimate using normalized Chebyshev polynomials gives
    $$ \left| [\widetilde V_d y]_i \right| = \left| \frac{1}{\beta_i - \alpha_i}\int_{\alpha_i}^{\beta_i} p_{d-1}^* (x) \de x \right| \leq \frac{\beta_i - \alpha_i}{\beta_i - \alpha_i} \Vert p_{d-1}^* \Vert_\infty \leq 2^{1-d} . $$
    Hence, we conclude that $ \kappa_2 \left( \widetilde V_d \right) \geq \frac{2^{d-1}}{\sqrt{d}} $, improving the above result by the factor $ c/2 $.
\end{proof}

The subsequent extension of Theorem \ref{th:cheby tool2} to generic intervals will immediately carry over to the normalized case. Note that Corollary \ref{cor:normalizedC1} does not require anymore the assumption $ \sup_{i} (\beta_i - \alpha_i) \geq c > 0 $, and is in fact blind to $ c $. In this case, the histopolation problem can be properly defined on \emph{regular} sets \cite{Robidoux}, and involves both averages and point evaluations.

\subsection{Segments with uniform length: the class (C4)} \label{sect:classC4}

The technical assumption $ \max_i (\beta_i - \alpha_i) \geq c > 0 $ makes the result of Theorem \ref{th:cheby tool2} tailored for a new class of segments, in which the constant $ c $ is the measure of a given reference element.

\begin{definition}
	Let $ s = [\alpha, \beta] $ be a reference segment and let $ \{ \xi_1, \ldots, \xi_d \} $ be real nodes. We define the segments in the class (C4) as the collection $ \{ s_1, \ldots, s_d \} $, where
	$$ s_i := \{ x + \xi_i : \ x \in s \} .$$
	Clearly, one has $ s_i = [\alpha + \xi_i, \beta + \xi_i ] $ and $ | s_i | = | s | $ for each $ i = 1, \ldots, d $.
\end{definition}

\begin{figure}[h]
	\centering	
	\begin{tikzpicture}[baseline=(current bounding box.north)]
		\coordinate (A) at (-3,0);
		\coordinate (B) at (3,0);
		\coordinate (C) at (1.5, 2.598);
		\coordinate (D) at (-1.5, 2.598);
		\coordinate (O) at (0,0);
		
		\draw[dashed] (A) -- (B);

		\coordinate (S2I) at (-2.298, 0);
		\coordinate (S2F) at (-0.521, 0);
		
		\coordinate (S1I) at (-1.298, 0);
		\coordinate (S1F) at (0.479, 0);
		
		\coordinate (S3I) at (-0.298, 0);
		\coordinate (S3F) at (1.479, 0);

		\draw[thick, blue] (-1.298, 0.5) -- (0.479, 0.5);
		\draw[thick, blue] (S2I) -- (S2F);
		\draw[thick, blue] (-0.298, 1) -- (1.479, 1);
		
		\fill (S2I) circle[radius=1.5pt];
		\fill (S1I) circle[radius=1.5pt];
		\fill (S3I) circle[radius=1.5pt];
		
		\node at (-1.5,0) [anchor=south]{$s_1$};
		\node at (1.5,1) [anchor=south]{$s_3$};
		\node at (0,0.5) [anchor=south]{$s_2$};
		\node at (-3,0) [anchor=south east]{$a$};
		\node at (3,0) [anchor=south west]{$b$};
		
		\draw[dotted] (-1.298, 0) -- (-1.298, 0.5);
		\draw[dotted] (-0.298, 0) -- (-0.298, 1);
		\node at (-2.298, 0) [anchor = north]{\tiny $ \xi_1$};
		\node at (-1.298, 0) [anchor = north]{\tiny $ \xi_2 \ne \xi_1 $};
		\node at (-0.298, 0) [anchor = north]{\tiny $ \xi_3 \ne \xi_{1,2} $};
	\end{tikzpicture}
	\caption{The segments $ s_1 $, $ s_2 $ and $ s_3 $ are translated of the same segment. They are examples of segments in the class (C4).} \label{fig:C4segments}
\end{figure}
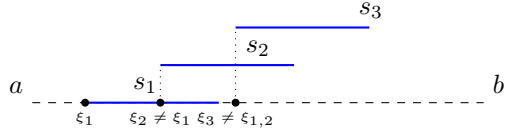

The class (C4) shows that the way segments overlap is a rather weak information for the purpose of unisolvence. In fact, we are going to show that the class (C4) is unisolvent for histopolation in $ \mathscr P_{d-1} (I) $ whenever the nodes $ \xi_i $, $ i = 1,\ldots,d$, are pairwise distinct (i.e., unisolvent for nodal interpolation), with no restriction on the measure of $ s $. This can be seen as a consequence of the following general fact.

\begin{lemma} \label{lem:FT}
	Let $ p : \R \to \R $ be a polynomial, and let $ s $ be an interval. Define the translated of $ s $ as $ s + \xi := \{ x + \xi : \ x \in s \} $. If
	$$ \int_{s+\xi} p(x) \de x = 0 $$
	for each $ \xi \in \R $, then $ p (x) = 0 $ for each $ x \in \R $.
\end{lemma}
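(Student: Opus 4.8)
The plan is to recognize the hypothesis as a \emph{deconvolution} identity and to invert it by means of the Fourier transform, which is the natural tool for such convolution-uniqueness statements. Write $s=[\alpha,\beta]$, set $h:=\beta-\alpha=|s|>0$, and for $\xi\in\R$ define $F(\xi):=\int_{s+\xi}f(x)\,\de x$; the hypothesis is that $F\equiv 0$. Since $\mathbf 1_{s+\xi}(x)=\mathbf 1_{s}(x-\xi)$, a change of variables rewrites $F$ as a convolution, $F(\xi)=\int_\R f(x)\,\mathbf 1_{-s}(\xi-x)\,\de x=(f*g)(\xi)$, where $g:=\mathbf 1_{-s}$ is the indicator of the reflected segment. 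Thus the entire content of the hypothesis is the single relation $f*g\equiv 0$, and the target becomes the injectivity of convolution against $g$. As a guiding cross-check, differentiating $F\equiv0$ through the fundamental theorem of calculus gives $f(\beta+\xi)-f(\alpha+\xi)=0$ for every $\xi$, i.e.\ $f(t+h)=f(t)$: the hypothesis already forces $f$ to be $h$-periodic with vanishing mean over one period, a structure that the Fourier computation will mirror exactly.

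Next I would pass to the frequency side. Since $g$ is bounded and compactly supported, its Fourier transform is the entire, modulated-sinc function
\[
\widehat g(\omega)=\int_{-\beta}^{-\alpha}e^{-i\omega t}\,\de t=e^{\,i\omega\frac{\alpha+\beta}{2}}\,\frac{2\sin(\omega h/2)}{\omega},
\]
whose zero set $Z:=\{\,\omega:\widehat g(\omega)=0\,\}=\{\,2\pi k/h:\ k\in\mathbb Z\setminus\{0\}\,\}$ is discrete, so that $\R\setminus Z$ is dense. Transforming the convolution relation $f*g\equiv 0$ turns it into the pointwise product identity $\widehat f\,\widehat g=0$. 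Because $\widehat g$ is nonzero on the dense set $\R\setminus Z$, one may locally divide by $\widehat g$ and conclude that $\widehat f$ vanishes there; equivalently $\operatorname{supp}\widehat f\subset Z$. This is perfectly consistent with the elementary observation above: the only frequencies $\widehat f$ may retain are the lattice points $2\pi k/h$, i.e.\ the harmonics of an $h$-periodic function.

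It remains to upgrade ``$\widehat f=0$ on $\R\setminus Z$'' to $\widehat f\equiv0$ and to invert, and this last implication is the main obstacle. If $\widehat f$ is a genuine continuous function, then vanishing on a dense set forces $\widehat f\equiv0$, and Fourier inversion immediately yields $f\equiv0$, which is the claim. The delicate point is therefore purely one of regularity: a continuous $f$ need not be integrable, so $\widehat f$ must first be interpreted (as a tempered distribution, which is legitimate because the functions supplied by the problem grow at most polynomially) and then shown to be honestly continuous, for only then does the support constraint $\operatorname{supp}\widehat f\subset Z$ collapse to $\widehat f\equiv0$. I would secure this by reducing to the integrable setting through a standard truncation-and-mollification argument, after which the density step closes the proof; equivalently, in the reduced setting one may finish without any transform at all, since by the periodicity established in the first paragraph a continuous $h$-periodic function that is integrable on $\R$ must vanish identically. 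This regularity reduction, rather than the algebra of the transform, is where the real work lies.
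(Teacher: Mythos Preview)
Your route via convolution and the Fourier transform is exactly the paper's, and you are right to flag the regularity step the paper glosses over. But that step cannot be closed, because the lemma as stated is \emph{false}. Your own differentiation argument already exposes this: from $F\equiv 0$ you deduced $f(t+h)=f(t)$ with $h=|s|$, and conversely \emph{every} continuous $h$-periodic function with zero mean over one period satisfies the hypothesis. Concretely, with $s=[0,1]$ the function $f(x)=\sin(2\pi x)$ has $\int_{\xi}^{\xi+1}\sin(2\pi x)\,\de x=0$ for all $\xi$ yet $f\not\equiv 0$. On the Fourier side this is precisely the obstruction you identified: for such $f$ the distribution $\widehat f$ is a nontrivial combination of Dirac masses sitting \emph{on} the zero set $Z=\{2\pi k/h:k\neq 0\}$ of $\widehat g$, so the inclusion $\operatorname{supp}\widehat f\subset Z$ is the best possible conclusion and cannot be upgraded to $\widehat f\equiv 0$.

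Consequently, neither of your proposed rescues works. A ``truncation-and-mollification'' reduction cannot succeed, since the counterexample is already $C^\infty$ and bounded (hence tempered); and your final remark---that a continuous $h$-periodic function integrable on all of $\R$ must vanish---is correct but smuggles in the extra hypothesis $f\in L^1(\R)$, which the lemma does not assume and which the counterexample violates. The appeal to ``functions supplied by the problem grow at most polynomially'' likewise imports context not present in the statement. The paper's own proof has the same flaw: it writes $\mathcal F(f)\cdot\mathcal F(\chi_{s-\xi})=0$ and then asserts $\mathcal F(f)=0$ almost everywhere, which is exactly the illegitimate step. What \emph{is} true, and is all the paper actually needs, is the special case where $f$ is a polynomial: then $h$-periodicity of the antiderivative forces it to be constant, so $f\equiv 0$. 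That elementary argument (essentially the paper's subsequent Remark) is the honest proof; the Fourier route requires an additional decay or integrability hypothesis on $f$ to be valid.
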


\begin{proof}
 Let $ P $ be any antiderivative of $ p $. Obviously, since $ p $ is a polynomial, so is $ P $. Then, writing the reference segment as $ s = [\alpha, \beta] $, one has
 $$ 0 = \int_{s + \xi} p(x) \de x = \int_{\alpha + \xi}^{\beta +\xi} p(x) \de x = P(\beta+\xi) - P(\alpha+\xi) ,$$
 which implies that $ P(\beta+\xi) = P(\alpha + \xi) $ for each $ \xi \in \R $. As a consequence, $ P $ is a periodic function of period $ | s | = \beta - \alpha $, and since it is a polynomial, it must be the constant polynomial. Since $ P $ is an antiderivative of $ p $, one has $ p = P' $, and since $ P $ is constant it follows that $ p (x) = P'(x) = 0 $ for each $ x \in \R $.
\end{proof}

We are in a position to prove the following unisolvence result.

\begin{proposition} \label{prop:unisolvence}
	Let $ s $ be a segment, and define $ s + \xi := \{ x + \xi : \ x \in s \} $. Consider $ \left\{ \xi_1, \ldots, \xi_{d} \right\} $ pairwise distinct nodes and let $ p_{d-1} \in \mathscr{P}_{d-1} (I) $. If
	$$ \int_{s + \xi_i} p_{d-1}(x) \de x = 0 $$
	for each $ i = 1, \ldots, d $, then $ p_{d-1} (x) = 0 $ for each $ x \in \R $.
\end{proposition}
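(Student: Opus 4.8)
The plan is to deduce the statement from Lemma~\ref{lem:FT} by observing that the quantity $\int_{s+\xi} p_{d-1}(x)\,\de x$, regarded as a function of the shift $\xi$, is itself a polynomial of controlled degree. Writing $s = [\alpha,\beta]$ and letting $P$ be an antiderivative of $p_{d-1}$, so that $P$ has degree at most $d$, I would set
$$ F(\xi) := \int_{s+\xi} p_{d-1}(x)\,\de x = P(\beta+\xi) - P(\alpha+\xi) . $$
A priori $F$ is a polynomial in $\xi$ of degree at most $d$.

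Next I would show that in fact $\deg F \leq d-1$. Indeed, the coefficient of $\xi^d$ in $P(\beta+\xi)$ is the leading coefficient of $P$, which coincides with the coefficient of $\xi^d$ in $P(\alpha+\xi)$; hence the two highest-order terms cancel in the difference. Equivalently, $F'(\xi) = p_{d-1}(\beta+\xi) - p_{d-1}(\alpha+\xi)$ has degree at most $d-2$, since the leading terms of $p_{d-1}(\beta+\xi)$ and $p_{d-1}(\alpha+\xi)$ are identical. This degree drop from $d$ to $d-1$ is the only point of the argument that needs care, and it is what makes exactly $d$ distinct supports already sufficient: if $F$ had degree $d$, knowing $d$ of its roots would not force it to vanish.

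With this in hand the conclusion is immediate. The hypothesis says $F(\xi_i) = 0$ for the $d$ pairwise distinct nodes $\xi_1, \ldots, \xi_d$; a polynomial of degree at most $d-1$ with $d$ distinct roots vanishes identically, so $F \equiv 0$, that is, $\int_{s+\xi} p_{d-1}(x)\,\de x = 0$ for every $\xi \in \R$. Since $p_{d-1}$ is continuous, Lemma~\ref{lem:FT} applied with $f = p_{d-1}$ yields $p_{d-1} \equiv 0$, as claimed; the degenerate case in which $s$ shrinks to a point is either vacuous or follows by a continuity argument in the endpoints of $s$.
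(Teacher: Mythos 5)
Your proof is correct and follows essentially the same route as the paper's: view $\xi \mapsto \int_{s+\xi} p_{d-1}(x)\,\de x$ as a polynomial in $\xi$, use the $d$ distinct roots to conclude it vanishes identically, and then invoke Lemma~\ref{lem:FT}. Your explicit verification that the degree drops from $d$ to $d-1$ (via cancellation of the leading terms of $P(\beta+\xi)$ and $P(\alpha+\xi)$) is a welcome detail that the paper leaves implicit.
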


\begin{proof}
	Consider the map
	$$ \xi \mapsto \int_{s+ \xi} p_{d-1} (x) \de x . $$
	By the change of variable, one sees that this is a polynomial $ q_{d-1}(\xi) $ of degree $ d - 1 $ in the variable $ \xi $. By hypothesis, this polynomial vanishes on the grid $ \left\{ \xi_1, \ldots, \xi_{d} \right\} $, which consists of distinct nodes. Hence the polynomial $ q_{d-1}(\xi) $ vanishes on the whole real line $ \R $, implying that $$ \int_{s + \xi} p_{d-1} (x) \de x = 0 $$ 
	for each $ \xi \in \R $. By Lemma \ref{lem:FT}, one gets $ p_{d-1} (x) = 0 $ for each $ x \in \R $.
\end{proof}

\begin{remark}
	Proposition \ref{prop:unisolvence} also has a different proof. 
	To this end, observe that the set of zeros of $ p_{d-1} (x) $ is a discrete set, containing at most $ d-1 $ nodes. If it had more, then $ p_{d-1} $ would be the zero polynomial. Call $ \xi_{\max} $ the largest one, i.e. the node $ \xi_i $ such that $ \xi_i \geq \xi_j $ for any $ j = 1, \ldots, d-1 $. Then, the result of Lemma \ref{lem:FT} can be deduced as there exists $ \xi \in \R $ such that $ \xi - |s| \geq \xi_{\max} $. Hence, the sign of $ p_{d-1} $ is constant in the interior of $ s + \xi $, and since $ \int_{s+\xi} p_{d-1} (x) \de x = 0 $, one has $ p_{d-1} (x) = 0 $ for each $ x $ in the (non empty) interior of $ s + \xi $. Since the only polynomial vanisihing in an open and non empty set is the zero polynomial, this implies that $ p_{d-1} (x) = 0 $ for each $ x \in \R $.
\end{remark}

The class (C4) has intersection with the class (C1), which is given by what we called, in the introduction, equidistributed segments. 
Due to Theorem \ref{th:cheby tool2}, The Vandermonde matrix associated with such segments and the monomial basis presents asymptotically an exponential ill-conditioning as a function of $d$. This is numerically verified and shown in Figure \ref{fig:exponentialclassC4}.

\begin{figure}[h!]
	\centering
	\includegraphics[width=0.75\textwidth]{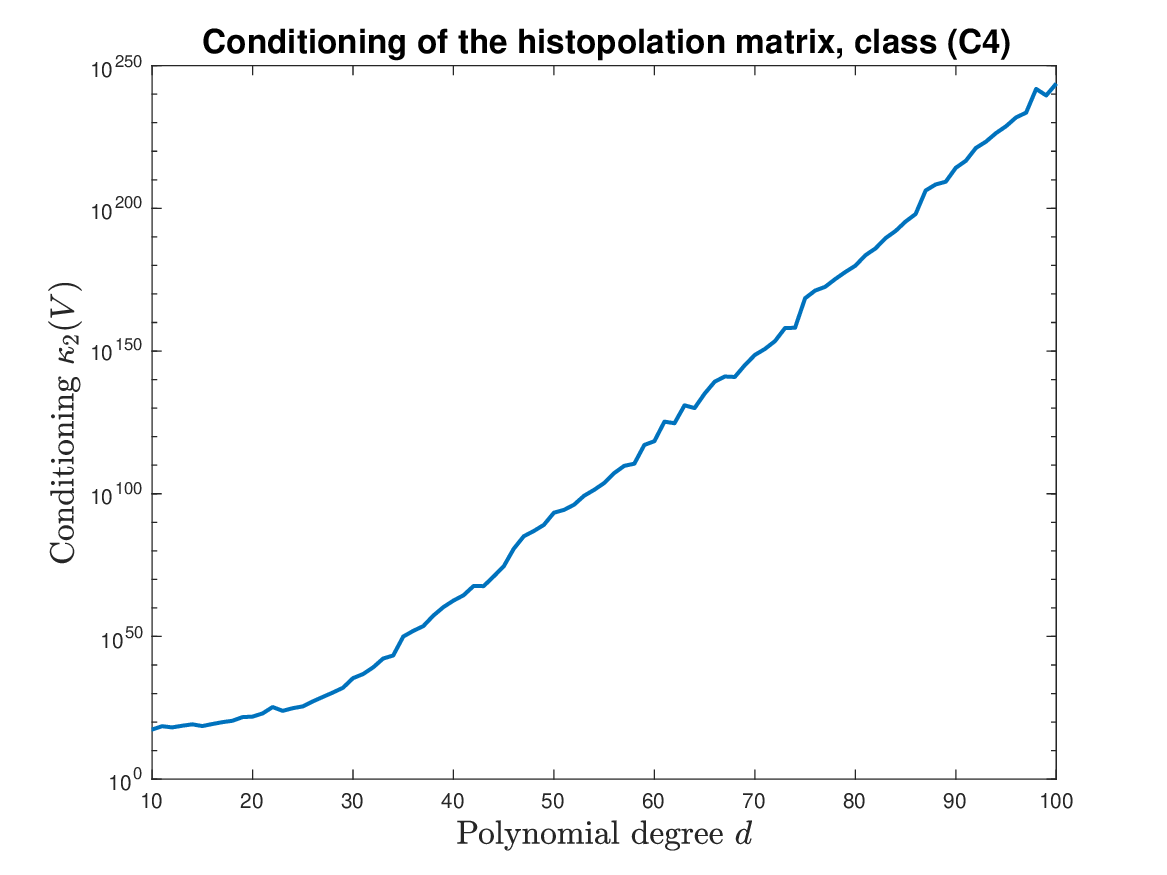}
	\caption{Exponential asymptotic ill-conditioning for segments in the class (C4).} \label{fig:exponentialclassC4}
\end{figure}

\subsection{Extension to generic intervals} \label{sect:generic}

The extension of Theorem \ref{th:cheby tool2} to the generic case is based on the following result.

\begin{theorem}\label{th:cheby tool3}
	Let $\alpha_i < \beta_i $ for $ i = 1, \ldots, d $ and assume $\liminf_{d\rightarrow \infty} \min_i \alpha_i=-1$ or, equivalently, $\limsup_{d\rightarrow \infty} \max_i\beta_i=1$. Suppose that there exists $ c $ independent of $ d $ such that $ \max_i (\beta_i - \alpha_i) \geq c > 0 $. Then 
	$$ \kappa_2 \left( V_d \right) \geq \frac{c}{\sqrt{d}} 2^{d-2} .$$
\end{theorem}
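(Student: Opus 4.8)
If the collection of segments is not unisolvent then $V_d$ is singular (its first column $(\beta_i-\alpha_i)_i$ is nonzero, so $V_d\neq 0$) and $\kappa_2(V_d)=+\infty$, so there is nothing to prove; we thus assume $V_d$ invertible. The plan is to reproduce the argument behind Theorem~\ref{th:cheby tool2}, with the reference interval $[-1,1]$ replaced by the convex hull $J_d := [\min_i \alpha_i,\, \max_i \beta_i]$ of the segments, of length $L_d := |J_d|$, so that $s_i \subseteq J_d$ for every $i$. The lower bound on the largest singular value carries over unchanged, because $[V_d]_{i,1} = \int_{s_i} 1\,\de x = \beta_i - \alpha_i$, so the first column of $V_d$ has Euclidean norm at least $\max_i(\beta_i - \alpha_i) \ge c$ and $\Vert V_d \Vert_2 \ge c$, with $c$ independent of the size $d$.

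For $\Vert V_d^{-1} \Vert_2$ I would replace the monic Chebyshev polynomial of $[-1,1]$ used in Theorem~\ref{th:cheby tool2} by the monic polynomial $q_{d-1}^\ast$ of degree $d-1$ of least uniform norm on $J_d$; by the classical extremal property it is an affine rescaling of the Chebyshev polynomial $T_{d-1}$, with $\Vert q_{d-1}^\ast \Vert_{L^\infty(J_d)} = 2\,(L_d/4)^{d-1}$. Its coefficient vector $a \in \R^d$ in the monomial basis has $a_d = 1$, so $\Vert a \Vert_2 \ge 1$; putting $b := V_d a$, the H\"older inequality and $s_i \subseteq J_d$ give
$$ |b_i| = \left| \int_{s_i} q_{d-1}^\ast(x)\,\de x \right| \le (\beta_i - \alpha_i)\, \Vert q_{d-1}^\ast \Vert_{L^\infty(J_d)} \le 2 L_d \left( \frac{L_d}{4} \right)^{d-1} , $$
hence $\Vert b \Vert_2 \le 2\sqrt{d}\, L_d\, (L_d/4)^{d-1}$. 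Applying \eqref{eq:useful} with $y = a$ yields $\Vert V_d^{-1} \Vert_2 \ge \Vert a \Vert_2 / \Vert b \Vert_2 \ge \big( 2\sqrt{d}\, L_d\, (L_d/4)^{d-1} \big)^{-1}$, and combining with $\Vert V_d \Vert_2 \ge c$,
$$ \kappa_2(V_d) \ge \frac{c}{2\sqrt{d}\, L_d} \left( \frac{4}{L_d} \right)^{d-1} . $$
When $L_d \le 2$ this is already of the announced order $\tfrac{c}{\sqrt{d}}\,2^{d-2}$, and I would recover the exact constant by bookkeeping it exactly as in the proof of Theorem~\ref{th:cheby tool2}.

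The remaining issue, which I expect to be the main obstacle, is to reduce to the case $L_d \le 2$ (or $L_d\to 2$), and this is where the hypothesis is used. The reflection $x \mapsto -x$ changes $V_d$ into $V_d\,\mathrm{diag}\big((-1)^{j-1}\big)$, an orthogonal modification that preserves $\kappa_2(V_d)$ and interchanges the two forms of the assumption, so we may assume $\liminf_{d} \min_i \alpha_i = -1$, i.e.\ $\min_i \alpha_i \ge -1 - o(1)$. The delicate point is that this controls only the left end of $J_d$: if moreover $\limsup_d \max_i \beta_i \le 1$ then $L_d \le 2 + o(1)$ and the displayed estimate gives the conclusion once the $o(1)$ in the base of the exponential is handled by a limiting argument; while if $\sup_{i,j} \max\{ |\alpha_i|, |\beta_j| \} \ge v$ for a fixed $v > 1$ --- the situation of Remark~\ref{rem:reducing}, in which $J_d$ may be arbitrarily long and the estimate above collapses --- one argues directly, bounding $\Vert V_d \Vert_2^2 \ge [V_d^\top V_d]_{d,d} = \sum_i \big( (\beta_i^d - \alpha_i^d)/d \big)^2$, which then grows faster than $2^{\kappa d}$ for every $\kappa$, against $\Vert V_d^{-1} \Vert_2 \ge (\sqrt{d}\, L_d)^{-1}$ obtained from $\lambda_d(V_d^\top V_d) \le [V_d^\top V_d]_{1,1} \le d\, L_d^2$. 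Controlling the cancellation in $\beta_i^d - \alpha_i^d$ and merging the two regimes into the single clean bound $\tfrac{c}{\sqrt{d}}\,2^{d-2}$ is the part requiring the most care.
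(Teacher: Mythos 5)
Your argument is, at its core, the same as the paper's: the lower bound $\Vert V_d\Vert_2\ge c$ from the first column and the lower bound on $\Vert V_d^{-1}\Vert_2$ obtained by feeding a monic Chebyshev extremal polynomial through \eqref{eq:useful} are exactly the mechanism of Theorem~\ref{th:cheby tool2}, and your reflection remark (right multiplication by $\mathrm{diag}((-1)^{j-1})$ preserves the singular values) is a clean justification of the ``or, equivalently'' in the statement. The only packaging difference is that you rescale the extremal polynomial to the hull $J_d$, whereas the paper rescales the matrix: it sets $\beta_{i^*_d}=\max_i\beta_i$, factors $V_d=\widehat V_d D_d$ with $D_d=\mathrm{diag}_j(\beta_{i^*_d}^{\,j})$, and uses $\kappa_2(V_d)\ge \kappa_2(\widehat V_d)/\kappa_2(D_d)$ together with Theorem~\ref{th:cheby tool2} applied to $\widehat V_d$; the two devices are equivalent in substance. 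The step you flag as the ``remaining issue'' is precisely where the paper proceeds differently from what you fear: it does \emph{not} merge your two regimes inside this proof. It opens with ``with no loss of generality we can assume $\alpha_i\ge -1$'', delegating the case of endpoints escaping $[-1,1]$ by a fixed margin $v>1$ to Remarks~\ref{rem:reducing} and~\ref{rem:reducing-bis} (so your cancellation worry about $\beta_i^d-\alpha_i^d$ does not arise here), and within the remaining regime it absorbs the $o(1)$ exactly as you anticipate, via $\kappa_2(D_d)\le(1+\varepsilon)^d$ for $d\ge d_\varepsilon$. You should be aware, though, that this only yields $\kappa_2(V_d)\ge \frac{c}{\sqrt d}\,2^{d-2}(1+\varepsilon)^{-d}$, i.e.\ exponential growth with base $2/(1+\varepsilon)$ rather than the literal constant in the statement, so your concern about ``recovering the exact constant by bookkeeping'' is not actually resolved by the paper either; in that sense your proposal is an honest account of the same proof, with its gap located exactly where the paper's own argument is loose.
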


\begin{proof}
We assume $\limsup_{d\rightarrow \infty} \max_i\beta_i=1$ and let $i^*_d$ such that $\beta_{i^*_d}=\max_{1\le i \le d} \beta_i $. With no loss of generality we can assume $\alpha_i\ge -1$. Hence
\[
	\frac{\beta_i^j-\alpha_i^j}{j}=\beta_{i^*_d}^j \frac{\gamma_i^j-\delta_i^j}{j},
\]
$\gamma_i=\frac{\beta_i}{\beta_{i^*_d}}\le 1$, $\delta_i=\frac{\alpha_i}{\beta_{i^*_d}}\ge -1$.
Hence $V_d=\widehat V_dD_d$ with $D_d=$diag$_j(\beta_{i^*_d}^j)$ so that we deduce
\[
\kappa_2(\widehat V_d)\le \kappa_2(V_d)\kappa_2(D_d)
\]
i.e.
\[
\kappa_2(V_d)\ge \frac{\kappa_2(\widehat V_d)}{\kappa_2(D_d)}.
\]
Now, taking into account the hypothesis $\limsup_{d\rightarrow \infty} \max_i\beta_i=1$, we infer that for each $ \varepsilon>0$, there exists $ d_\varepsilon$ such $\kappa_2(D_d)\le (1+\varepsilon)^d$, for any $d\ge d_\varepsilon$ and finally by Theorem \ref{th:cheby tool2} we conclude the proof since
\[
\kappa_2(V_d)\ge \frac{\kappa_2(\widehat V_d)}{\kappa_2(D_d)} \ge \frac{c}{\sqrt{d}} \frac{2^{d-2}}{(1+\varepsilon)^d}
\]
and $\varepsilon$ is chosen so that $1+\varepsilon<2$.
\end{proof}

To better frame Theorem \ref{th:cheby tool3}, we conclude the section gathering some considerations and consequences.

\begin{remark} \label{rem:reducing-bis}
In connection with the proof of Theorem \ref{th:cheby tool3}, we note that the latter is the missing piece of the picture. Indeed, we are finally in the position of completing the reasoning with the following observation: if $\liminf_{d\rightarrow \infty}\lim_i \alpha_i<-1$ or $\limsup_{d\rightarrow \infty} \max_i \beta_i>1$, then passing to proper subsequences, we go back to the same setting as in Remark \ref{rem:reducing}.  As a conclusion the exponential conditioning is a general phenomenon. This is particularly interesting given the link of the considered structures with the Vandermonde matrices on with nodes on the real line; see \cite{BeckermannConditioning,SSCconditioning}.
\end{remark}

\begin{remark}
If we consider standard induced norms $\|\cdot\|_{l^k\rightarrow l^p}$, $k,p\in [1,\infty]$ (the case $k=p=2$ being the standard Euclidean case, also known as spectral norm), then we obtain again exponential conditioning, since the equivalence constants among all these induced norms have a rational expression with respect to the variable $d$.
\end{remark}

\begin{remark} \label{rem:further-generalizing}
The picture given in Theorems \ref{th:cheby tool2} and \ref{th:cheby tool3} and in Remarks \ref{rem:reducing} and \ref{rem:reducing-bis} is even more general. In fact, if we consider sets $\{i \cdot \alpha_i^{(d)} =1,\ldots,d\}$, $\{ i \cdot \beta_i^{(d)} =1,\ldots,d\}$, of natural numbers, in which for $d_1\neq d_2$ we have in general $\alpha_i^{(d_1)}\neq \alpha_i^{(d_2)}$ and $\beta_i^{(d_1)}\neq \beta_i^{(d_2)}$, $i\leq \min \{d_1,d_2\}$, all the proofs work unchanged. This represents a further degree of generality in the analysis.
\end{remark}

\subsection{Some remarks on the independence of $ c $ on $ d $}

In the non-normalized case, we have assumed the existence of a \textquotedblleft separation distance\textquotedblright\ $ c > 0 $ such that $\max_i \left(\beta_i -\alpha_i \right) \geq c $. We asked such a constant to be independent of $ d $. This is clearly motivated by the fact that, if $ c $ depended on $ d $, it could decay to zero fast enough to simplify the exponential term $ 2^{d-2} $. It is plain to observe that, if $ c $ depends on $ d $ and does not decay exponentially, the exponential trend of the conditioning of the monomial basis persists. An example is provided when considering equidistributed segments in $ I = [-1, 1] $.

\begin{example}
	Let $ I $ be an interval, and let $ s_1, \ldots, s_d $ be equidistributed segments covering $ I $. Clearly, one has $ | s_i | = \beta_i - \alpha_i = |I|/d $ for each $ i = 1, \ldots, d $, and this quantity tends to zero linearly with $ d $ as $ d $ grows, contradicting the hypotheses on $ c $ required in the statement of Theorem \ref{th:cheby tool2}. However, the conditioning is still exponential.
	
	Since
	$$ \Vert V_d \Vert_2 = \max_{\substack{z \in \R^d \\ z \ne 0}} \frac{\Vert V_d z \Vert_2}{\Vert z \Vert_2}, $$
	by taking $ z = e_1 $, the first element of the canonical basis of $ \R^d $, one has
	$$ \Vert V_d \Vert_2 \geq \Vert V_d e_1 \Vert_2 = \sqrt{\sum_{i=1}^d ( \beta_i - \alpha_i )^2}  = \sqrt{\sum_{i=1}^d \left( \frac{|I|}{d} \right)^2} = \sqrt{d \left( \frac{|I|}{d} \right)^2} = \frac{|I|}{\sqrt{d}} .$$
	Merging this with the estimate for $ \Vert V_d^{-1} \Vert_2 $ given in Theorem \ref{th:cheby tool2}, which does not depend on the size of the segments, we get
	$$ \kappa_2 (V_d) = \Vert V_d \Vert_2 \Vert V_d^{-1} \Vert_2 \geq \frac{|I|}{\sqrt{d}} \frac{2^{d-1}}{\sqrt{d}} = |I| \frac{2^{d-2}}{d} . $$
	Notice that this represents a limiting value for all collection of segments that are arranged into chains, and that this result does not require specific hypotheses on $ I $.
\end{example}

Suppose, in complete contrast, that such a constant $ c = \max_{i} (\beta_i - \alpha_i) $ is equal to zero; this means that every segment $ s_i $ collapses to a point $ \xi_i $. Even if these nodes are distinct, the problem \eqref{eq:histopolation} becomes ill-posed, as the matrix \eqref{eq:Vandermonde} becomes the zero matrix. This situation can be handled by placing the normalization factor $ |s_i| $ in the histopolation conditions and considering the normalized case. If one imposes that the limiting nodes $ \{ \xi_1, \ldots, \xi_d \} $ are pairwise distinct, by an immediate continuity argument and the mean value theorem, one deduces that the problem has a unique solution \cite{BEfekete}, and Corollary \ref{cor:normalizedC1} implies that the corresponding matrix is exponentially ill-conditioned. However, the same continuity argument shows that the Vandermonde matrix \eqref{eq:Vandermonde} converges (in an entry-wise sense) to the nodal Vandermonde matrix based upon the nodes $ \{ \xi_1, \ldots, \xi_d \} $ and written with respect to the monomial basis. The well-established results concerning the ill-conditioning of the monomial basis for the nodal theory \cite{BeckermannConditioning,SSCconditioning} therefore give a different proof of the exponential ill-conditioning in this limit.

\section{Well-conditioning: Chebyshev basis and segments} \label{sect:Chebyshev}

In stark contrast to Section \ref{sec:illcond}, a very different situation appears when the \emph{Chebyshev polynomials of the second kind}
$$ U_{j-1} (x) = \frac{\sin (j \arccos x)}{\sin (\arccos x)} $$
are considered. This family is not the most diffused in nodal interpolation, where the \emph{first} one
$$ T_j (x) = \cos(j \arccos x) $$
is usually adopted. The two families are related via differentiation, as $ \de / \de x \left( T_j (x) \right) = j U_{j-1} (x) $. As computed in \cite[Section $2.2$]{BEsinum}, the Vandermonde matrix \eqref{eq:Vandermonde} assumes a very peculiar form with respect to the basis $ U_{j-1} (x) $:
\begin{align} \label{eq:VandermondeU}
	\left[V_d \right]_{i,j} & = \int_{\alpha_i}^{\beta_i} U_{j-1}(x) \de x = \frac{1}{j} \int_{\alpha_i}^{\beta_i} \frac{\de T_{j}(x)}{\de x} \de x = \frac{1}{j} \left( T_j (\beta_i) - T_j (\alpha_i) \right) \\
	& = \frac{2}{j} \sin \left( j \frac{\arccos \alpha_i +\arccos \beta_i}{2}\right) \sin \left( j \frac{\arccos \alpha_i -\arccos \beta_i}{2}\right) . \nonumber
\end{align}

We show that this basis is particular well conditioned when considered in combination with segments in the class (C2). Recall that these segments are defined as
\begin{equation} \label{eq:classC2}
	s_i = [\alpha_i, \beta_i] = [\cos(\tau_i + \rho), \cos (\tau_i - \rho) ] ,
\end{equation}
being $ \tau_i $ the (pre-image of the) $i$-th Chebyshev node $ \tau_i = \frac{(i-1/2) \pi}{d} $ for $ i = 1, \ldots, d $ and $ \rho > 0 $ an arc length veryfing the existence conditions established in \cite[Section 3.2]{BEsinum}. Plugging the definition of the polynomial $ U_{j-1} (x) $ and class (C2) segments into \eqref{eq:VandermondeU}, one immediately obtains that the respective Vandermonde matrix is 
\begin{equation} \label{eq:vandermondeC2}
	\left[V_d \right]_{i,j} = \frac{2}{j} \sin (j \tau_i) \sin (j \rho) .
\end{equation}
This matrix has orthogonal columns.

\begin{proposition} \label{prop:orthogonality}
	The matrix $ V_d^\top V_d $ is diagonal. The $i$-th diagonal term is
	\begin{equation} \label{eq:diagonalterm}
		\left[ V_d^\top V_d \right]_{i,i} = \begin{cases} 
			\frac{2d}{i^2} \sin^2 (i \rho) & \text{if } i = 1, \ldots, d-1, \\
			\frac{4d}{i^2} \sin^2 (i \rho) & \text{if } i = d .
			\end{cases}
	\end{equation}
	Notice that, when $ i = d $, the diagonal term is $ 4d/i^2 \sin^2 (i \rho) = 4/d \sin^2 (d \rho) $.
\end{proposition}

\begin{proof}
	By direct computation, one obtains
	\begin{align} \label{eq:diagVtV}
		\left[ V_d^\top V_d \right]_{i,j} & = \sum_{k=1}^d \left[V_d^\top \right]_{i,k} \left[V_d \right]_{k,j} = \sum_{k=1}^d \left[V_d\right]_{k,i} \left[V_d\right]_{k,j}
		 \\ & = \sum_{k=1}^d \frac{2}{i} \sin (i \tau_k) \sin (i \rho) \frac{2}{j} \sin (j \tau_k) \sin (j \rho) \nonumber
		= \frac{4 \sin (i \rho) \sin (j \rho)}{ij} \sum_{k=1}^d \sin (i \tau_k) \sin(j \tau_k) .
	\end{align}
    To conclude the proof, we shall establish the discrete orthogonality
    	\begin{equation} \label{eq:orthsine}
	\sum_{k=1}^d \sin (l \tau_k) \sin(m \tau_k) = 
	\begin{cases}
		\frac{d}{2} & \text{if } l = m < d \\
		d & \text{if } l = m = d \\
        0 & \text{if } l \ne m.
	\end{cases} 
	\end{equation}
	To exploit the features of the Chebyshev nodes, we use Werner's formula
	$$ \sin (l \tau_k) \sin(m \tau_k) = \frac{1}{2} \left[ \cos((l-m) \tau_k) - \cos ((l+m) \tau_k) \right] .$$
	Consider first the case $ l = m $, i.e. the diagonal terms of $ V_d^\top V_d $. One has
	\begin{align} \label{eq:Werner}
		\sum_{k=1}^d \sin (l \tau_k) \sin(l \tau_k) & = \frac{1}{2} \sum_{k=1}^d \left[ \cos((l-l) \tau_k) - \cos ((l+l) \tau_k) \right] = \frac{1}{2} \sum_{k=1}^d \left[ \cos(0) - \cos (2 l \tau_k) \right] \nonumber \\
		& = \frac{1}{2} \sum_{k=1}^d 1 - \frac{1}{2} \sum_{k=1}^d \cos (2l \tau_k) = \frac{d}{2} - \frac{1}{2} \sum_{k=1}^d \cos (2l \tau_k) .
	\end{align}
    Recalling that $ \tau_k = (2k-1)\pi/(2d) $, 
    we compute the sum $ \sum_{k=1}^d \cos (2l \tau_k) $. Denoting by $ \Re $ the real part of a complex number and by $ \iota $ the imaginary unit, by Euler's formula one has
	\begin{align*} 
    \sum_{k=1}^d \cos \left( \frac{l \pi}{d} (2k-1) \right) & = \sum_{k=1}^d \Re \left( e^{\iota \frac{l \pi}{d} (2k-1)} \right) = \Re \left(  \sum_{k=1}^d e^{\iota \frac{l \pi}{d} (2k-1)} \right) \\
    & = \Re \left( e^{- \iota \frac{l \pi}{d}} \sum_{k=1}^d e^{\iota \frac{l \pi 2k}{d}}\right) = \Re \left( e^{- \iota \frac{l \pi}{d}} \sum_{k=1}^d \left( e^{\iota \frac{l 2 \pi}{d}} \right)^k \right) .
    \end{align*}
	For $ l = d $, one obtains $ e^{i \frac{l 2 \pi}{d}} = e^{i \frac{d 2 \pi}{d}} = e^{i 2 \pi} = \cos(2 \pi) = 1 $. Hence the real part can be removed:
	$$ \sum_{k=1}^d e^{\iota \frac{d \pi}{d} (2k-1)} = e^{- \iota \pi} \sum_{k=1}^d 1^k = - d . $$
    Plugging this into \eqref{eq:Werner} one obtains the claim for $ l = m = d $.

	The case $ 1 \leq l < d $ can be treated as a geometric series of ratio $ e^{\iota \frac{l 2 \pi}{d}} $, which gives 
	\begin{equation} \label{eq:trigonometriclong}
		e^{- \iota \frac{l \pi}{d}} \sum_{k=1}^d \left( e^{\iota \frac{l 2 \pi}{d}} \right)^k  = e^{- \iota \frac{l \pi}{d}} e^{\iota \frac{l 2 \pi}{d}}  \frac{1-\left(e^{\iota \frac{l 2 \pi}{d}} \right)^d}{1-e^{\iota \frac{l 2 \pi}{d}} } = e^{- \iota \frac{l \pi}{d}} e^{\iota \frac{l 2 \pi}{d}}  \frac{1-e^{\iota l 2 \pi}}{1-e^{\iota \frac{l 2 \pi}{d}} } .
	\end{equation}
	Since $ 1 \leq l < d $, $ 2 \pi l $ is an integer multiple of $ 2 \pi $, while $ 2 \pi l / d$ is not, and thus $ e^{i \frac{l 2 \pi}{d}} \ne 1 $ for any $ l = 1, \ldots, d-1 $. It follows that 
	$$ \frac{1-e^{\iota l 2 \pi}}{1-e^{\iota \frac{l 2 \pi}{d}} } = \frac{1-1}{1-e^{\iota \frac{l 2 \pi}{d}} } =  0 .$$
    Plugging this into \eqref{eq:Werner} we retrieve the first equality of \eqref{eq:orthsine}. Therefore, the claim on the diagonal terms of $ V_d^\top V_d $ is established. 
	
	We are left to prove that the off-diagonal terms of $ V_d^\top V_d $ vanish. 
    We hence show that 
	$$ \sum_{k=1}^d \cos \left( (l-m) \tau_k \right) - \sum_{k=1}^d \cos \left( (l+m) \tau_k \right) = 0 .$$
	To prove the above equality, we shall use the trigonometric representation of complex numbers again. Owing to the linearity of the real part, we focus on 
	$$ \sum_{k=1}^d \cos\left( \frac{n (2k-1) \pi}{d} \right) = \Re \left( \sum_{k=1}^d e^{\iota \frac{n}{2d} (2k-1) \pi } \right) ,$$
	either for $ n = l-m $ or $ n = l+m $. We are going to show that both sums are equal to zero (when both $ l - m $ and $ l + m $ are even), or that both sums add to a completely imaginary number (when both $ l - m $ and $ l + m $ are odd). This reasoning clearly exhausts all cases.
	
	Following the same reasoning that lead to \eqref{eq:trigonometriclong}, one finds
	\begin{equation} \label{eq:lasttrigonometric}
		\sum_{k=1}^d e^{\iota \frac{n}{2d} (2k-1) \pi } = e^{-\iota \frac{n \pi}{2d}} \sum_{k=1}^d \left( e^{\iota \frac{n \pi}{d}} \right)^k = e^{-\iota \frac{n \pi}{2d}} e^{\iota \frac{n \pi}{d}} \frac{1- e^{\iota n \pi}}{1- e^{\iota \frac{n \pi}{d}}} .
		\end{equation}
	The rightmost hand side of \eqref{eq:lasttrigonometric} exists as $ n $ cannot be an entire multiple of $ 2d $ (because $ l $ and $ m $ represent off-diagonal terms, and so $ - d < l-m < d $ with $ l-m \ne 0 $ and $ 0 < l + m < 2d $), and this implies that $ e^{\iota \frac{n \pi}{d}} \ne 1 $.
	
	To conclude the proof we consider separately the even and the odd case. First, 
	suppose that $ n = l-m $ is even. In this case $ e^{\iota n \pi} = \cos(2 \pi) + \iota \sin(2 \pi) = 0 $, and as a consequence
	$$ \sum_{k=1}^d e^{\iota \frac{n}{2d} (2k-1) \pi } = e^{-\iota \frac{n \pi}{2d}} e^{\iota \frac{n \pi}{d}} \frac{1- e^{\iota n \pi}}{1- e^{\iota \frac{n \pi}{d}}} = 
    e^{\iota \frac{n \pi}{2d}} \frac{1 - 1}{1 - e^{\iota \frac{n \pi}{d}}} = 0 . $$
	Assume then $ n = l-m $ is odd. In such a case, $ e^{\iota n \pi} = \cos (\pi) + \iota \sin (\pi) = -1 $. Treating the sum as in \eqref{eq:lasttrigonometric} and manipulating, one gets
	\begin{align*}
		\sum_{k=1}^d e^{\iota \frac{n}{2d} (2k-1) \pi } & = e^{- \iota \frac{n \pi}{2d}} e^{\iota \frac{n \pi}{d}} \frac{1- e^{\iota n \pi}}{1- e^{\iota \frac{n \pi}{d}}} = 
        e^{\iota \frac{n \pi}{2d}} \frac{1 + 1}{1- e^{\iota \frac{n \pi}{d}}} = 2 \frac{e^{\iota n \frac{\pi}{2d}}}{1-e^{\iota n \frac{\pi}{d}}} = \frac{2}{e^{-\iota n \frac{\pi}{2d}}-e^{\iota n \frac{\pi}{2d}}} .
    \end{align*}
Substituting the identity $ e^{\iota n \frac{\pi}{2d}} - e^{- \iota n \frac{\pi}{2d}} = 2 \iota \sin (\frac{n \pi}{2d}) $ and simplifying, one concludes that 
$$ \sum_{k=1}^d e^{\iota \frac{n}{2d} (2k-1) \pi } = \frac{\iota}{\sin(\frac{n \pi}{2d}) }, $$
	which has no real part. The same argument applies to $ \sum_{k=1}^d \cos \left((l+m) \tau_k\right) $, whence it follows that
	$$  \Re \left( \sum_{k=1}^d \cos \left( (l-m) \tau_k \right) \right) = \Re \left( \sum_{k=1}^d \cos \left( (l+m) \tau_k \right) \right) = 0. $$
	This immediately implies the last equality of \eqref{eq:orthsine}. 
    Gathering all the computations and plugging them into \eqref{eq:diagVtV}, one obtains
	\begin{equation*}
		\left[ V_d^\top V_d \right]_{i,j} = 
		\begin{cases}
			\frac{2d}{i^2} \sin^2 (i \rho) & \text{if } i = 1, \ldots, d-1, \\
			\frac{4d}{i^2} \sin^2 (i \rho) & \text{if } i = d, \\
			0 & \text{if } i \ne j, 
		\end{cases}
	\end{equation*}
	which shows the claim.
\end{proof}

Proposition \ref{prop:orthogonality} is the key ingredient to unveil some features of $ V_d $ and the corresponding matrix sequence. The first easy consequence is an exact formula for the modulus of the determinant of $ V_d $, when the class (C2) is considered in combination with Chebyshev polynomials of the second kind. The sign of $ \det V_d $ depends on $ d $, and due to the reverse ordering of the Chebyshev nodes it equals $ (-1)^{\frac{d(d-1)}{2}}$.

\begin{corollary} \label{cor:detC2}
	One has
	\begin{equation} \label{eq:detVdCheby}
		\left| \det V_d \right| = \sqrt{2 \frac{(2d)^d}{(d!)^2} \prod_{i=1}^d \sin^2 \left( i \rho \right)} .
	\end{equation}
\end{corollary}

\begin{proof}
	It is clear that $ \det (V_d^\top V_d) = \left(\det V_d\right)^2 $, whence
	\begin{align*} 
		\left| \det V_d \right| & = \sqrt{ \det (V_d^\top V_d) } = \sqrt{\prod_{i=1}^d [V_d^\top V_d]_{i,i} } = \sqrt{\left( \prod_{i=1}^{d-1} \frac{2d}{i^2} \sin^2 (i \rho) \right) \frac{4d}{d^2} \sin^2 (d \rho)} 
		\\ & = \sqrt{2 \prod_{i=1}^{d} \frac{2d}{i^2} \sin^2 (i \rho)}.
	\end{align*}
	Let us examine the argument of the last square root, up to the factor $ 2 $. One has
	\begin{equation} \label{eq:partialprod}
		\prod_{i=1}^d \frac{2d}{i^2} \sin^2 (i \rho) = \frac{\prod_{i=1}^d 2d}{\prod_{i=1}^d i^2} \prod_{i=1}^d \sin^2 (i \rho) = \frac{(2d)^d}{(d!)^2} \prod_{i=1}^d \sin^2 (i \rho) ,
	\end{equation}
	which gives the claim. 
\end{proof}

Formula \eqref{eq:detVdCheby} simplifies for $ \rho = \pi/(2d) $. The relevance of this case, which lies at the intersection of classes (C1) and (C2), is described in \cite{BEsinum}. This value will play an important role also in the next sections.

\begin{corollary}
    If $ \rho = \pi/(2d) $, then \eqref{eq:detVdCheby} reduces to
	$$ \left| \det V_d \right| = 
    \sqrt{\frac{d^{d+1}}{(d!)^2 2^{d-3}}}.$$
\end{corollary}

  \begin{proof}
    
    To prove this fact, we shall invoke the trigonometric equality
	\begin{equation} \label{eq:trigonometricequality}
		\prod_{i=1}^{2d-1} \sin \left( \frac{i\pi}{2d} \right) = \frac{2d}{2^{2d-1}} .
	\end{equation}
	Notice, in particular, that we can write
	$$ \prod_{i=1}^{2d-1} \sin \left( \frac{i\pi}{2d} \right) = \prod_{i=1}^{d-1} \sin \left( \frac{i\pi}{2d} \right) \cdot \sin{ \left(\frac{d \pi}{2d} \right)} \cdot \prod_{i=d+1}^{2d-1} \sin \left( \frac{i\pi}{2d} \right) = \prod_{i=1}^{d-1} \sin \left( \frac{i\pi}{2d} \right) \cdot \prod_{i=d+1}^{2d-1} \sin \left( \frac{i\pi}{2d} \right) .$$
	By the symmetry of the sine function $ \sin (\pi - t) = \sin (t) $, by the change of indices $ j = 2d - i $, one immediately retrieves that
	$$ \prod_{i=d+1}^{2d-1} \sin \left( \frac{i\pi}{2d} \right) = \prod_{i=1}^{d-1} \sin \left( \frac{i\pi}{2d} \right) = \prod_{i=1}^{d} \sin \left( \frac{i\pi}{2d} \right) ,$$
	where the last equality follows as the $ d$-th term of the product is $ \sin( \pi/2) =  1 $. Plugging this into \eqref{eq:trigonometricequality}, one obtains
	$$ \prod_{i=1}^{d-1} \sin^2 \left( \frac{i\pi}{2d} \right) = \frac{2d}{2^{2d-1}} = \frac{d}{2^{2(d-1)}} .$$
	Plugging this into \eqref{eq:partialprod}, one finds
	$$ \prod_{i=1}^d \frac{2d}{i^2} \sin^2 (i \rho) = \frac{(2d)^d}{(d!)^2} \frac{d}{2^{2(d-1)}} = \frac{d (2d)^d}{(d! 2^{d-1})^2} ,$$
	which immediately yields the claim by including the factor $ 2 $, simplifying and taking the square root.
\end{proof}

We close this section with an interesting fact. The identity \eqref{eq:trigonometricequality} is known, but the authors struggled to find a reference.  
For completeness, we report a possible proof.

\begin{lemma}
	The trigonometric equality \eqref{eq:trigonometricequality} holds; that is, for any $ n \in \N $, one has
	$$ \prod_{i=1}^{n-1} \sin \left( \frac{i\pi}{n} \right) = \frac{n}{2^{n-1}} . $$
\end{lemma}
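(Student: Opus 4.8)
The plan is to obtain the identity from the factorization of $z^n - 1$ over $\mathbb{C}$ rather than by a trigonometric induction. The roots of $z^n - 1$ are precisely the $n$-th roots of unity $\omega_k := e^{2\pi\mathrm{i}k/n}$, $k = 0, 1, \ldots, n-1$, so that $z^n - 1 = \prod_{k=0}^{n-1}(z - \omega_k)$. Dividing by $z - 1 = z - \omega_0$ gives the polynomial identity $\sum_{j=0}^{n-1} z^j = \prod_{k=1}^{n-1}(z - \omega_k)$, and evaluating both sides at $z = 1$ yields
$$ n = \prod_{k=1}^{n-1}(1 - \omega_k). $$

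The next step is to take moduli on both sides. Using the elementary computation $|1 - e^{\mathrm{i}\theta}|^2 = 2 - 2\cos\theta = 4\sin^2(\theta/2)$ with $\theta = 2\pi k/n$, one has $|1 - \omega_k| = 2\left|\sin(k\pi/n)\right|$, and since $0 < k\pi/n < \pi$ for $k = 1, \ldots, n-1$ the sine is strictly positive on that range, so the absolute value can be dropped. Hence
$$ n = \prod_{k=1}^{n-1}|1 - \omega_k| = 2^{n-1}\prod_{k=1}^{n-1}\sin\left(\frac{k\pi}{n}\right), $$
which is exactly the claim; specializing to $n = 2d$ recovers \eqref{eq:trigonometricequality}.

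The proof is short and the only points deserving care are purely formal: one should evaluate $\frac{z^n-1}{z-1}$ at $z=1$ through the polynomial identity $\sum_{j=0}^{n-1}z^j$ rather than a limiting argument, and one must invoke the positivity of $\sin(k\pi/n)$ on $(0,\pi)$ to discard the absolute values — there is no genuine obstacle. For a route closer to the spirit of the present paper one could instead start from $U_{n-1}(x) = 2^{n-1}\prod_{k=1}^{n-1}\bigl(x - \cos(k\pi/n)\bigr)$, compare with $U_{n-1}(\cos\theta) = \sin(n\theta)/\sin\theta$ as $\theta \to 0$ to get a half-angle version of the identity, and then fold it onto itself via $\cos(k\pi/(2n)) = \sin((n-k)\pi/(2n))$, using a symmetry manipulation analogous to the one in the preceding corollary; this works but is longer, so the roots-of-unity argument is the one I would write out.
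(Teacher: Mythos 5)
Your argument is correct and rests on the same key identity as the paper's proof, namely evaluating $\prod_{k=1}^{n-1}(z-e^{2\pi \mathrm{i}k/n})=\sum_{j=0}^{n-1}z^j$ at $z=1$ to get $\prod_{k=1}^{n-1}(1-\omega_k)=n$. The only difference is cosmetic: you pass to moduli via $|1-e^{\mathrm{i}\theta}|=2\sin(\theta/2)$ and invoke positivity of $\sin(k\pi/n)$ on $(0,\pi)$, whereas the paper tracks the phases explicitly with Euler's formula and shows they cancel exactly; your route avoids that bookkeeping and is, if anything, slightly cleaner.
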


\begin{proof}
	By multiplying both sides for $ 2^{n-1} $ and using Euler's formula, we may rewrite left hand side of the identity as
	\begin{align*}
		\prod_{j=1}^{n-1} 2 \sin \left( \frac{j\pi}{n} \right) & = \prod_{j=1}^{n-1} 2 \sin \left( \frac{j\pi}{n} \right) = \prod_{j=1}^{n-1} \frac{e^{i \frac{j \pi}{n}} - e^{i \frac{j \pi}{n}} }{i} = (-i)^{n-1} \prod_{j=1}^{n-1} e^{-i \frac{j \pi}{n}} \left( e^{2 i \frac{j \pi}{n}} - 1 \right) \\
		& = (-i)^{n-1} \prod_{j=1}^{n-1} e^{-i \frac{j \pi}{n}} \prod_{j=1}^{n-1} \left( e^{2 i \frac{j \pi}{n}} - 1 \right) = (-i)^{n-1} e^{-i \sum_{j=1}^{n-1} \frac{j \pi}{n}} \prod_{j=1}^{n-1} \left( e^{2 i \frac{j \pi}{n}} - 1 \right) \\
		& = (-i)^{n-1} e^{-i (n-1) \pi} \prod_{j=1}^{n-1} \left( e^{2 i \frac{j \pi}{n}} - 1 \right) = \prod_{j=1}^{n-1} \left( 1 - e^{2 i \frac{j \pi}{n}} \right) .
	\end{align*}
	We shall prove that this quantity now equals to $ n $. The last term can be treated analyzing the roots of the unity, as
	$$ z^{n} - 1 = \prod_{j=0}^{n-1} \left( z - e^{2i \frac{j \pi}{n}} \right) = (z-1) \prod_{j=1}^{n-1} \left( z - e^{2i \frac{j \pi}{n}} \right), $$
	whence, by dividing both sides by $ z-1 $ and computing the quotient, it follows that
	$$ \prod_{j=1}^{n-1} \left( z - e^{2 i \frac{j \pi}{n}} \right) = \sum_{j=0}^{n-1} z^j .$$
	Evaluating the expression at $ z = 1 $ we get the claim.
\end{proof}

\subsection{An inversion formula for the Chebyshev segments} \label{sect:inverseC2}

The orthogonality of the Vandermonde matrix can be used to determine its inverse, and in particular the Lagrange basis associated with segments in the class (C2).

\begin{remark}
	The columns of $ V_d ^{-1} $ are the coefficients that turn a given basis into the Lagrange one, i.e. the only (up to permutations) basis $ \{ \ell_1, \ldots, \ell_d \} $ for $ \mathscr{P}_{d-1} (I) $ such that $ \int_{s_i} \ell_j (x) \de x = \delta_{i,j} $, see \cite[Proposition 1]{BSSC}. An explicit formula for the inverse of $ V_d $ is crucial, as such bases in histopolation are known only for segments in the classes (C1) and (C3).
\end{remark}

For ease of the next computations, let us set once more $ W_d := V_d^\top V_d $. In Proposition \ref{prop:orthogonality} we proved that $ W_d $ is a diagonal invertible matrix, whenever the segments are unisolvent for histopolation. Hence, by left-multiplying for $ W_d^{-1} $ and right-multiplying by $ V_d^{-1} $, one gets
\begin{equation} \label{eq:inverseVandermonde}
	V_d^{-1} = W_d^{-1} V_d^\top .
\end{equation}
The elements for $ V_d^\top $ are given in \eqref{eq:vandermondeC2}, while those of $ W_d^{-1} $ are immediately retrieved by \eqref{eq:diagonalterm}. Now, in general one has:
\begin{align*}
	\left[ V_d^{-1}\right]_{i,j} & = \left[ W_d^{-1} V_d^\top  \right]_{i,j} = \sum_{k=1}^d \left[ W_d^{-1} \right]_{i,k} \left[ V_d^\top \right]_{k,j} = \sum_{k=1}^d \left[ W_d^{-1} \right]_{i,k} \left[ V_d \right]_{j,k} .
\end{align*}
Since $ W_d^{-1} $ is diagonal, plugging in the explicit expressions of the terms computed in \eqref{eq:diagonalterm}, one has
\begin{align*}
	\left[ V_d^{-1} \right]_{i,j} = \left[W_d^{-1} \right]_{i,i} \left[ V_d \right]_{i,j} = \frac{i^2}{2d \sin^2 (i \rho)} \cdot \frac{2 \sin (i \tau_j) \sin (i \rho)}{i} = \frac{i \sin (i \tau_j)}{d \sin (i \rho)} 
\end{align*}
for $ i \ne d $, and 
\begin{align*}
	\left[ V_d^{-1} \right]_{d,j} = \left[W_d^{-1} \right]_{d,d} \left[ V_d \right]_{d,j} = \frac{d^2}{4d \sin^2 (d \rho)} \cdot \frac{2 \sin (d \tau_j) \sin (d \rho)}{d} = \frac{\sin (d \tau_j)}{2 \sin (d \rho)} 
\end{align*}
for $ i = d $ (i.e., the same formula as above, with the corrective factor $ 2 $).

\subsection{A formula for the conditioning of Chebyshev segments} \label{sect:condCheb}

An immediate consequence of Proposition \ref{prop:orthogonality} is that the singular values of $ V_d $ are the diagonal entries of $ V_d^\top V_d $. A direct inspection on \eqref{eq:diagonalterm} shows that the ordering of the diagonal elements of $ V_d^\top V_d $ depends on the relationship between $ \rho $ and $ d $. Nevertheless, monotonicity conditions can be explicitly stated.

\begin{lemma} \label{lem:decreasing}
	Let $ 0 < \rho < \frac{\pi}{d} $. Then for $ 1 \leq i < j \leq d-1 $, one has
	$$ \left[ V_d^\top V_d \right]_{i,i} = \frac{2d}{i^2} \sin^2 (i \rho) > \frac{2d}{j^2} \sin^2 (j \rho) = \left[ V_d^\top V_d \right]_{j,j} . $$
\end{lemma}

\begin{proof}
	Consider the function $ f(x) = \sin^2 (x \rho) / x^2 $, for $ x > 0 $. One computes that
	$$ f'(x) = \frac{2 \sin (\rho x) \left(\rho x \cos (\rho x) - \sin (\rho x) \right) }{x^3} ,$$
	whence $ f'(x) < 0 $ for $ 0 < x < \pi/\rho $. This implies that, for $ i,j = 1, \ldots, d-1 $ and $ i < j $,
	$$ \frac{ \sin^2 (i \rho)}{i^2} > \frac{ \sin^2 (j \rho)}{j^2} ,$$
	which immediately yields the claim.
\end{proof}

\begin{remark} \label{rmk:pimezzi}
	Although the connection between $ \rho $ and $ d $ may seem a technical condition, such an inverse relationship between $ \rho $ and $ d $ is in fact quite natural, as discussed in \cite{BEsinum}. 
	Indeed, taking $ \rho = \frac{\pi}{2d} $, arcs overlap in their endpoints, and one obtains segments that belong to both classes (C1) and (C2). Any smaller $ \rho > 0 $ gives non-overlapping segments, while $ \rho = \frac{\pi}{d} $ results in a loss of unisolvence, as evident from \eqref{eq:vandermondeC2}. From now on we hence assume that the arc-length parameter is of the form $ \rho = a \frac{\pi}{d} $, with $ 0 < a < 1 $. This choice is consistent with the literature \cite{BEfekete} and ensures the well-posedness of the histopolation problem \cite{BEsinum}.
\end{remark}

Lemma \ref{lem:decreasing} states that the all diagonal terms of $ V_d^\top V_d $, except the last one, are decreasingly ordered, provided that $ \rho $ is sufficiently small. 
%
Since $ V_d^\top V_d $ is a diagonal matrix, the ratio between its largest and smallest element gives the Euclidean conditioning of $ V_d $. Hence, to obtain an explicit formula for $ \kappa_2 (V_d) $, we only need to understand how the last diagonal term of $W_d := V_d^\top V_d $ relates with the others. 

\begin{lemma} \label{lem:doublebound}
	Let $ \frac{\pi}{2d} \leq \rho < \frac{\pi}{d} $ (i.e., $ 1/2 \leq a < 1 $). One has
	\begin{equation} \label{eq:sigma1}
		[W_d]_{1,1} = 2d \sin^2 (\rho) > \frac{4}{d} \sin^2 (d \rho) = [W_d]_{d,d}
	\end{equation}
	Let $ 0 < \rho \leq \frac{\pi}{2d} $ (i.e., $ 0 < a \leq 1/2 $). One has
	\begin{equation} \label{eq:sigmad}
		[W_d]_{d,d} =  \frac{4}{d} \sin^2 (d \rho) >  \frac{2d}{(d-1)^2} \sin^2 ((d-1) \rho) = [W_d]_{d-1,d-1} .
	\end{equation}
	Further, for any $ 0 < \rho < \frac{\pi}{d} $ (i.e., for any $ 0 < a < 1 $), one has 
	\begin{equation} \label{eq:limitratio}
		\lim_{d \to \infty} \frac{[W_d]_{d,d}}{[W_{d}]_{d-1,d-1}} = 2 .
	\end{equation}
\end{lemma}

\begin{proof}
	The claims for \eqref{eq:sigma1} and \eqref{eq:sigmad} can be obtained by an analogous analysis to that of Lemma \ref{lem:decreasing}. To obtain \eqref{eq:limitratio}, we plug the explicit characterizations given in \eqref{eq:diagVtV} and compute
	\begin{align*}
		\lim_{d \to \infty} \frac{[W_d]_{d,d}}{[W_{d}]_{d-1,d-1}} & = \lim_{d \to \infty} \frac{\frac{4d}{d^2} \sin^2 (\rho d)}{\frac{2d}{(d-1)^2} \sin^2 ((d-1) \rho)} = \lim_{d \to \infty} \frac{2 (d-1)^2  \sin^2 (a \frac{\pi}{d} d)}{d^2 \sin^2 ((d-1) a \frac{\pi}{d})}  \\
		& = 2 \lim_{d \to \infty} \frac{(d-1)^2}{d^2} \frac{\sin^2 ( a \pi ) }{ \sin^2 (a\frac{d-1}{d} \pi)} = 2.
	\end{align*}
	Observe that the above computation is meaningful, as $ 0 < a\frac{d-1}{d} \pi < a \pi < \pi $, so the denominator never vanishes.
\end{proof}

Lemma \ref{lem:doublebound} has two important consequences. First, it evidences the limiting role of the case $ \rho = \pi/(2d) $ even beyond the context of interpolation discussed in Remark \ref{rmk:pimezzi}. 
By a continuity argument, for arc-lengths around such a value, merging \eqref{eq:sigma1} and \eqref{eq:sigmad} one deduces the simple formula
\begin{align} \label{eq:simplifiedcond}
	\kappa_2 \left( V_d \right) & = \frac{\sigma_1 (V_d)}{\sigma_d (V_d)} = \sqrt{\frac{\lambda_1 (V_d^\top V_d )}{\lambda_d (V_d^\top V_d )}} = \sqrt{\frac{[V_d^\top V_d ]_{1,1}}{[V_d^\top V_d ]_{d-1,d-1}} } = \sqrt{\frac{(d-1)^2 \sin^2(\rho)}{\sin^2 ((d-1) \rho)}} \\
	& = \frac{(d-1) \sin (\rho)}{\sin ((d-1) \rho)} \nonumber
\end{align}
for the conditioning of $ V_d $. Note the abosence of the absolute value in the last term, which has been removed due to the positivity of the sine function for $ 0 < \rho = a \pi /d < a (d-1)/d \pi = (d-1) \rho < \pi $ with $ 0 < a < 1 $. 
Further, as $ \kappa_2 (V_d) $ can be obviously lower bounded by the ratio of any two diagonal terms of $ W_d $, for large values of $ d $ we may take the $(d-1)$-th and the $ d $-th to obtain
$$ \kappa_2 (V_d) \geq \sqrt{\frac{[V_d^\top V_d ]_{d,d}}{[V_d^\top V_d ]_{d-1,d-1}} } ,$$
and owing to \eqref{eq:limitratio} one also gets the asymptotic lower bound
\begin{equation} \label{eq:asymptoticlowerbound}
	\lim_{d \to \infty} \kappa_2 (V_d) \geq \sqrt 2 .
\end{equation}

\begin{theorem} \label{thm:conditioningC2}
For the Chebyshev segments belonging to the intersection of classes (C1) and (C2), i.e. for $ \rho = a \frac{\pi}{d}$ with $ a = 1/2 $, the conditioning $ \kappa_2 (V_d) $ is monotonically increasing towards
	$$ \lim_{d\to \infty}\kappa_2 \left( V_d \right) = \frac{\pi}{2} . $$
\end{theorem}

\begin{proof}
	Plugging the condition $ \rho = \frac{\pi}{2d} $ into \eqref{eq:simplifiedcond}, one gets that
	$$ \kappa_2 \left(V_d \right) = \frac{(d-1) \sin (\rho)}{\sin ((d-1) \rho)} = \frac{(d-1) \sin (\frac{\pi}{2d} )}{\sin ((d-1) \frac{\pi}{2d} )} .$$
	By considering the limit $ d \to \infty $, and setting $ t = \pi/(2d) $, one has
	$$ \lim_{d \to \infty} \kappa_2 \left( V_d \right) = \lim_{d \to \infty} \frac{(d-1) \sin (\frac{\pi}{2d} )}{\sin ((d-1) \frac{\pi}{2d} )} = \frac{1}{\sin (\frac{\pi}{2}) }\lim_{t \to 0} \frac{\left( \pi - 2t \right) \sin (t)}{2t} = \frac{\pi}{2} .$$
	To show the monotonicity of the conditioning, we shall prove that
	$$ \frac{d \sin \left(\frac{\pi}{2d}\right)}{\sin \left( d \frac{\pi}{2 d }\right)} = \frac{ d \sin \left(\frac{\pi}{2d}\right)}{\sin \left( \frac{\pi}{2}\right)} \geq \frac{ (d-1 )\sin \left(\frac{\pi}{2(d-1)}\right)}{\sin \left( \frac{\pi}{2}\right)} = \frac{(d-1) \sin \left(\frac{\pi}{2(d-1)} \right)}{\sin \left((d-1)\frac{\pi}{2 (d-1)} \right)} $$
	for each $ d \geq 1 $. This can be done by showing that the function $ f(x) := x \sin (\pi/(2x)) $ is increasing for $ x > 1 $, i.e., that $ f'(x) > 0 $ for $ x > 1 $. After differentiation, making the change of variable $ t = \pi/(2x) $, it is readily seen that this is equivalent to proving that $ g(t) := \tan (t) - t > 0 $ for $ t \in (0, \pi/2) $. The first derivative of this function is $ g'(t) = \tan^2 (t) $, which is clearly defined and positive in such an interval, and since $ \lim_{t \to 0^+} g(t) = 0^+ $, this concludes the proof.
\end{proof}

A numerical verification of the case $ a = 1/2 $ of Theorem \ref{thm:conditioningC2} is provided in Figure \ref{fig:condexampleC2}.

\begin{figure}[h!]
	\centering
	\includegraphics[width=0.75\textwidth]{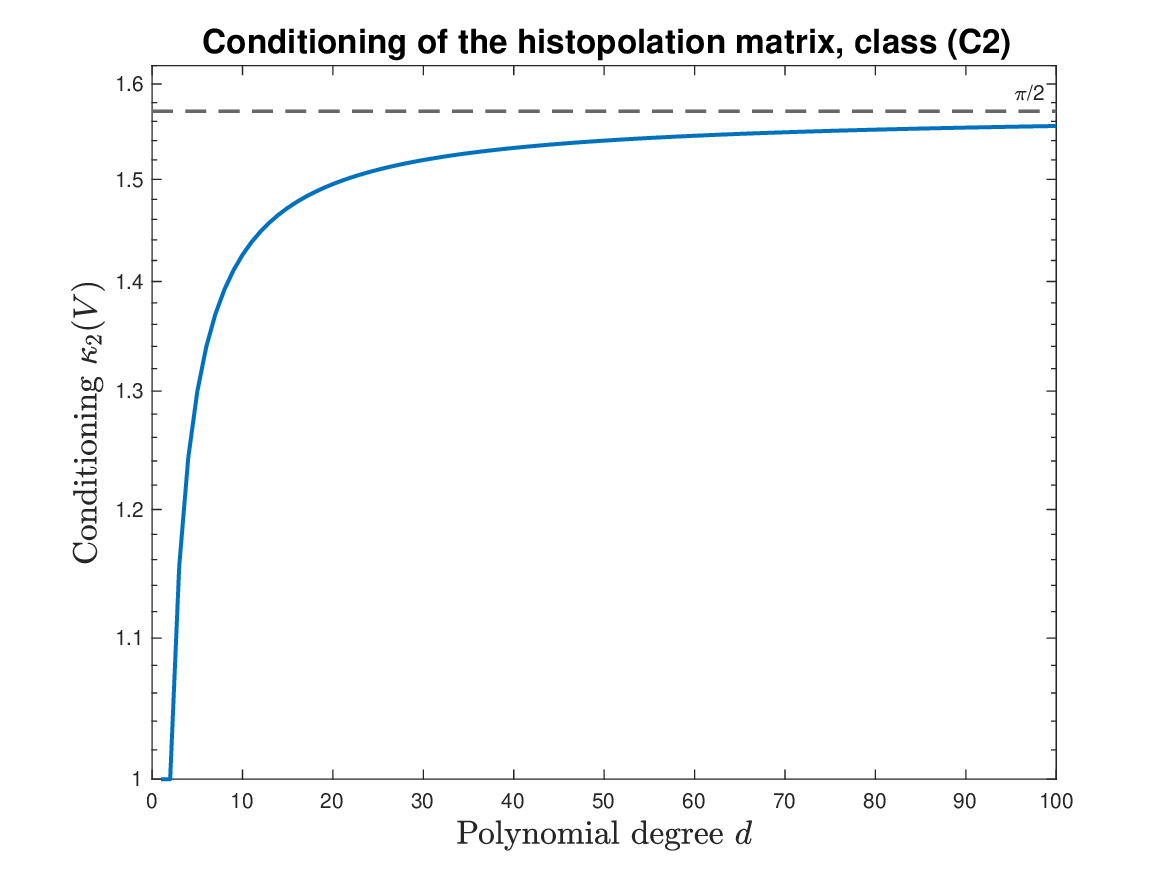}
	\caption{Bounded conditioning of the Vandermonde matrix in the Chebyshev basis for the class (C2). The parameter is $ a = 1 / 2 $, so $ \rho = \pi/(2d) $. The resulting segments also belong to the class (C1).} \label{fig:condexampleC2}
\end{figure}


\subsubsection{Asymptotics for $ a $ far from $ 1/2 $}

Lemma \ref{lem:doublebound}, together with an immediate continuity argument, showed that the conditioning of $V_d$ around $ a = 1/2 $ is described by the square root of the ratio between $ [W_d]_{1,1} $ and $ [W_d]_{d-1,d-1} $. For small perturbations, hence, Theorem \ref{thm:conditioningC2} provides a good approximation of the limit value. However, if one considers $ a $ either close to $ 0 $ or $ 1 $, it is possible to reverse the inequalities \eqref{eq:sigma1} and \eqref{eq:sigmad}.

\begin{example} \label{ex:nobound}
	Consider $ a = 1/10 $ and $ d = 3 $. One has
	$$ [W_3]_{3,3} = \frac{4}{3} \sin^2 \left( \frac{\pi}{10} \right) \approx 0.1274 > 0.0656 \approx 6 \sin^2 \left( \frac{\pi}{30} \right) = [W_3]_{1,1} .$$
	Consider $ a = 9/10 $ and $ d = 3 $. One has
	$$ [W_3]_{3,3} = \frac{4}{3} \sin^2 \left( \frac{9}{10} \pi \right) \approx 0.1274 < 1.8090 \approx 2 \sin^2 \left( \frac{3}{5} \pi \right) = [W_3]_{2,2} .$$
\end{example}

From the perspective of an asymptotic analysis, Example \ref{ex:nobound} does not constitute a real obstacle, as we are interested in the relationship between the elements of $ W_d $ for large values of $ d $. To this end, we gather the results of Section \ref{sect:condCheb} in the following Proposition, which is an immediate consequence of Lemma \ref{lem:decreasing} and Lemma \ref{lem:doublebound}.

%

\begin{proposition} \label{prop:tresholding}
	Let $ 0 < a < 1 $. One has
	\begin{equation} \label{eq:genericconditioning}
		\kappa_2^2 (V_d) = 
			\begin{cases}
				\frac{[W_{d}]_{1,1}}{[W_d]_{d,d}} & \text{if } [W_d]_{d,d} < [W_d]_{d-1,d-1}, \\
				\frac{[W_{d}]_{d,d}}{[W_d]_{d-1,d-1}} & \text{if } [W_d]_{d,d} \geq [W_d]_{1,1}, \\
				\frac{[W_{d}]_{1,1}}{[W_d]_{d-1,d-1}} & \text{if } [W_d]_{1,1} \geq [W_d]_{d,d} \geq [W_d]_{d-1,d-1} .
			\end{cases}
	\end{equation}
\end{proposition}

It remains to establish which case one falls in. First of all, recall \eqref{eq:limitratio} from Lemma \ref{lem:doublebound}. Such a formula implies that, for large values of $ d $, the first case of \eqref{eq:genericconditioning} is empty. We now show that, for large values of $ d $, there exists a thresholding value $ a^* $ for $ 0 < a < 1 $ which separates the remaining two situations in Proposition \ref{prop:tresholding}.

\begin{proposition}
	There exists $ a^* $, with $ 0 < a^* < 1 $, such that for each $ 0 < a < a^* $ one has
	\begin{equation} \label{eq:a*}
		\lim_{d \to \infty} \frac{[W_d]_{d,d}}{[W_d]_{1,1}} > 1 .
	\end{equation}
\end{proposition}

\begin{proof}
	To establish \eqref{eq:a*} one computes, applying the substitution $ t = a \pi / d $,
	\begin{align*} 
		\lim_{d \to \infty} \frac{[W_d]_{d,d}}{[W_d]_{1,1}} & = \lim_{d \to \infty} \frac{\frac{4}{d} \sin^2 ( d a \frac{\pi}{d})}{2d \sin^2 ( a \frac{\pi}{d})} = \lim_{d \to \infty} \frac{\sin^2 (a \pi)}{\frac{d^2}{2} \sin^2 (a \frac{\pi}{d})} = 2 \sin^2 (a \pi) \lim_{d \to \infty} \frac{1}{d^2 \sin^2 \left( a \frac{\pi}{d} \right)} \\
		& =  2 \sin^2 (a \pi) \lim_{t \to 0} \frac{t^2 (t)}{\sin^2(t)} \frac{1}{a^2 \pi^2} = \frac{2 \sin^2 (a \pi) }{a^2 \pi^2} .
	\end{align*}
	By computing an approximate solution of $ \frac{2 \sin^2 (a \pi) }{a^2 \pi^2} > 1 $, one obtains $ a^* \approx 0.40 $.
\end{proof}

For large $ d $, one may hence make the claim of Proposition \ref{prop:tresholding} more precise.

\begin{theorem} \label{thm:asymptcond}
	One has
	\begin{equation} \label{eq:asymptoticconditioning}
		\lim_{d \to \infty} \kappa_2 (V_d) = \begin{cases}
			\frac{a \pi}{\sin (a \pi)} & \text{if } a^* < a < 1, \\
			\sqrt{2} & \text{if } 0 < a < a^* .
			\end{cases}
	\end{equation}
\end{theorem}

\begin{proof}
	Suppose first that $ a \in (0, a^*) $. Then, by \eqref{eq:a*}, for large $ d $ one has $ [W_d]_{d,d} > [W_d]_{1,1} $. Due to Lemma \ref{lem:decreasing}, one in general has $ [W_d]_{1, 1} > [W_d]_{i,i} $ for any $ i = 2, \ldots, d-1 $, hence
	$$ \lim_{d \to \infty} \kappa_2 (V_d) =  \lim_{d \to \infty} \sqrt{\frac{[W_d]_{d,d}}{[W_{d}]_{d-1,d-1}}} =  \sqrt{2} ,$$
	as computed in \eqref{eq:limitratio}.
	
	Suppose then that $ a \in (a^*, 1) $. By \eqref{eq:a*}, for large $ d $ one has that in such an interval $ [W_d]_{1,1} > [W_d]_{d,d} $. Further, by \eqref{eq:limitratio}, we deduce that in this limit $ [W_d]_{d,d} > [W_{d}]_{d-1,d-1} $. It follows that
	$$ \lim_{d \to \infty} \kappa_2 (V_d) =  \lim_{d \to \infty} \sqrt{\frac{[W_d]_{1,1}}{[W_{d}]_{d-1,d-1}}} =  \lim_{d \to \infty}
	\sqrt{ \frac{2d \sin^2 \left( a \frac{\pi}{d}\right)}{\frac{2d}{(d-1)^2} \sin^2 \left( a \pi \frac{(d-1)}{d}\right)}} $$
	and, by applying the same substitution $ t = a \pi /d $ and reasoning of Proposition \ref{prop:tresholding}, one readily computes that
	$$ \lim_{d \to \infty} \kappa_2 (V_d) = \frac{a \pi}{\sin(a \pi)} .$$
	This concludes the proof.
\end{proof}

Note that \eqref{eq:asymptoticconditioning} has two interesting implications: first, it allows to extend, in an asymptotic perspective, the formula \eqref{eq:simplifiedcond} to a large family of segments in the class (C2). Further, it shows that for $ a < a^* $ the conditioning plateaus around the optimal value, in the sense of the lower bound \eqref{eq:asymptoticlowerbound}. A numerical verification of Theorem \ref{thm:asymptcond} is given in Figure \ref{fig:asymptotic}.

\begin{figure}[h!]
	\centering
	\includegraphics[width=0.49\textwidth]{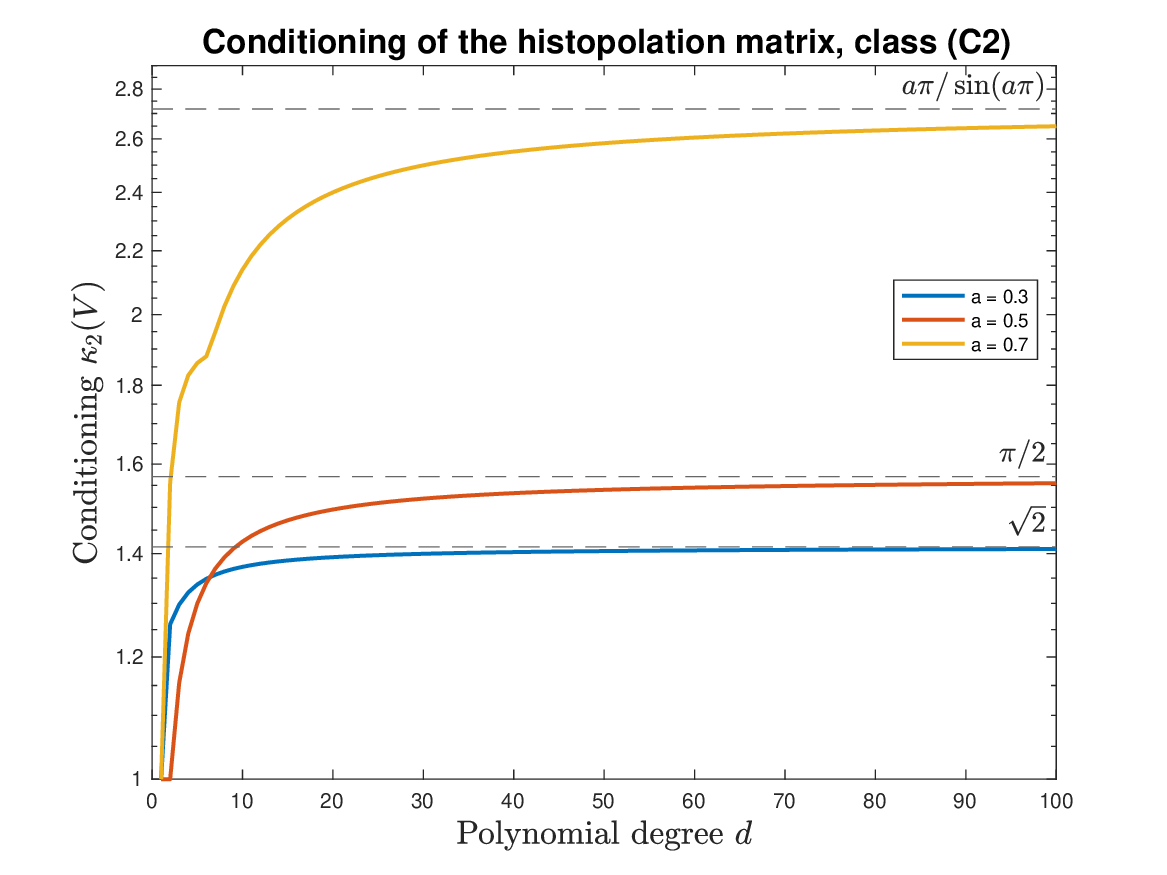}
	\includegraphics[width=0.49\textwidth]{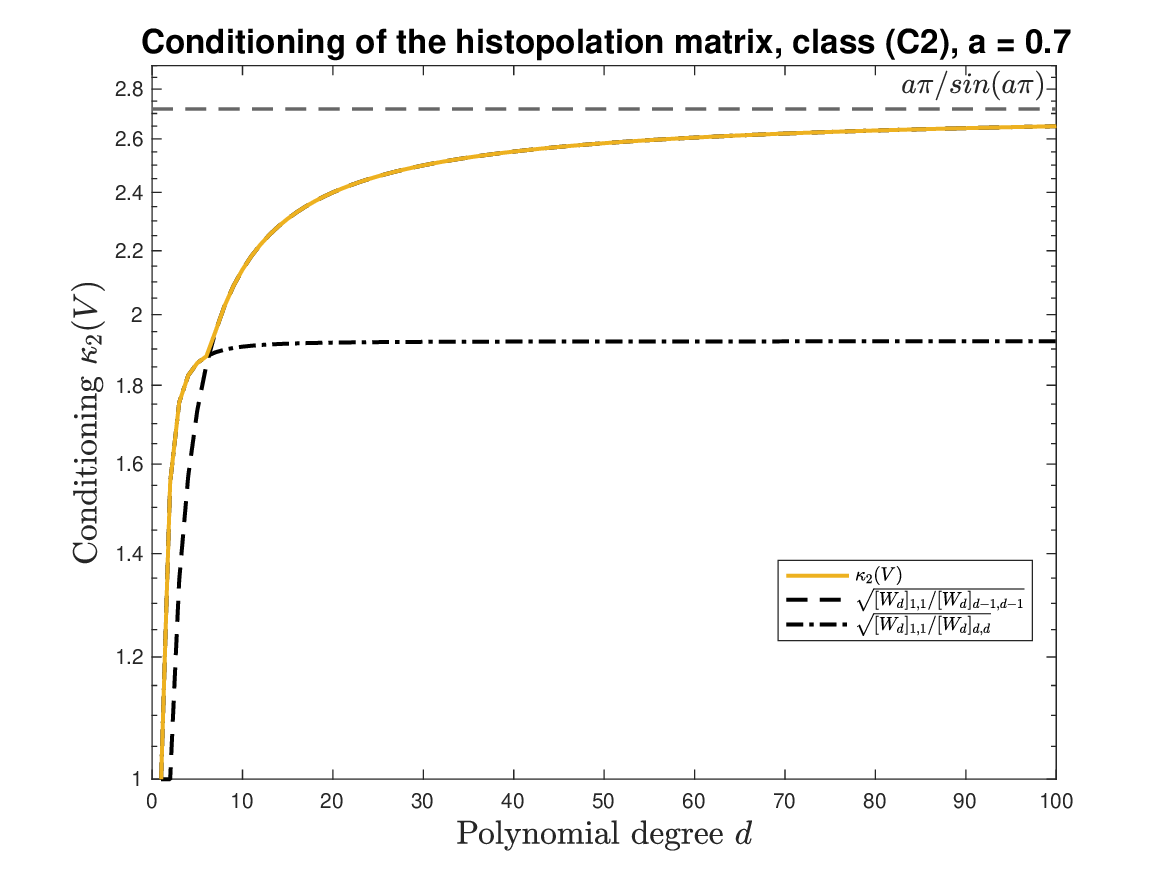}
	\caption{Bounded conditioning of the Vandermonde matrix in the Chebyshev basis for the class (C2), for different values of $ a $. Observe that, for $ a = 0.7$ (orange line in both panels), for $ d = 6 $ there is an exchange in the relationship between $ [W_{d}]_{1,1} $ and $ [W_d]_{d,d} $.} \label{fig:asymptotic}
\end{figure}

\begin{remark}
	The situation pictured in Theorem \ref{thm:asymptcond} is asymptotic. In practice, for intermediate values of $ d $, the first case of Proposition \ref{prop:tresholding} may happen. 
	Likewise, it may happen that, if $ a $ is fixed, there exists a finite value $ d^* $ for which the largest (respectively the smallest) eigenvalue of $ W_{d^*} $ exchanges its role with $ [W_{d^*}]_{d^*, d^*} $. This value $ d^* $ gives the peculiar step pictured in the line corresponding to $ a = 0.7 $ in Figure \ref{fig:asymptotic}. A detail of this phenomenon is pictured at the right hand panel of the same Figure.

\end{remark}

\subsubsection{The effect of the Chebyshev basis on generic segments}

Let us now remark an interesting aspect. The selection of the basis is crucial to avoid an exponential conditioning of the Vandermonde matrix. Nevertheless, without an appropriate selection of histopolation segments, Chebyshev polynomials are not sufficient to control the conditioning of the Vandermonde matrix. This is depicted in the left hand panel of Figure \ref{fig:wrongbases}, where the conditioning of the Chebyshev basis (blue solid line) on equidisitributed segments of class (C1) is shown. Although there is a clear improvement with respect to the monomial basis (red dashed line), the exponential growth for the conditioning is not avoided. On the other hand, the choice of the family of segments is of interest as well: the right hand panel of Figure \ref{fig:wrongbases} shows the conditioning of the monomial basis (blue solid line) for segments in the class (C2), which is smaller than that obtained for segments in class (C1) for the corresponding polynomial degrees (again, red dashed line). For comparison, equidistributed segments in the class (C1) on $ I = [-1,1] $ were considered.

\begin{figure}[h!]
	\includegraphics[width=0.49\textwidth]{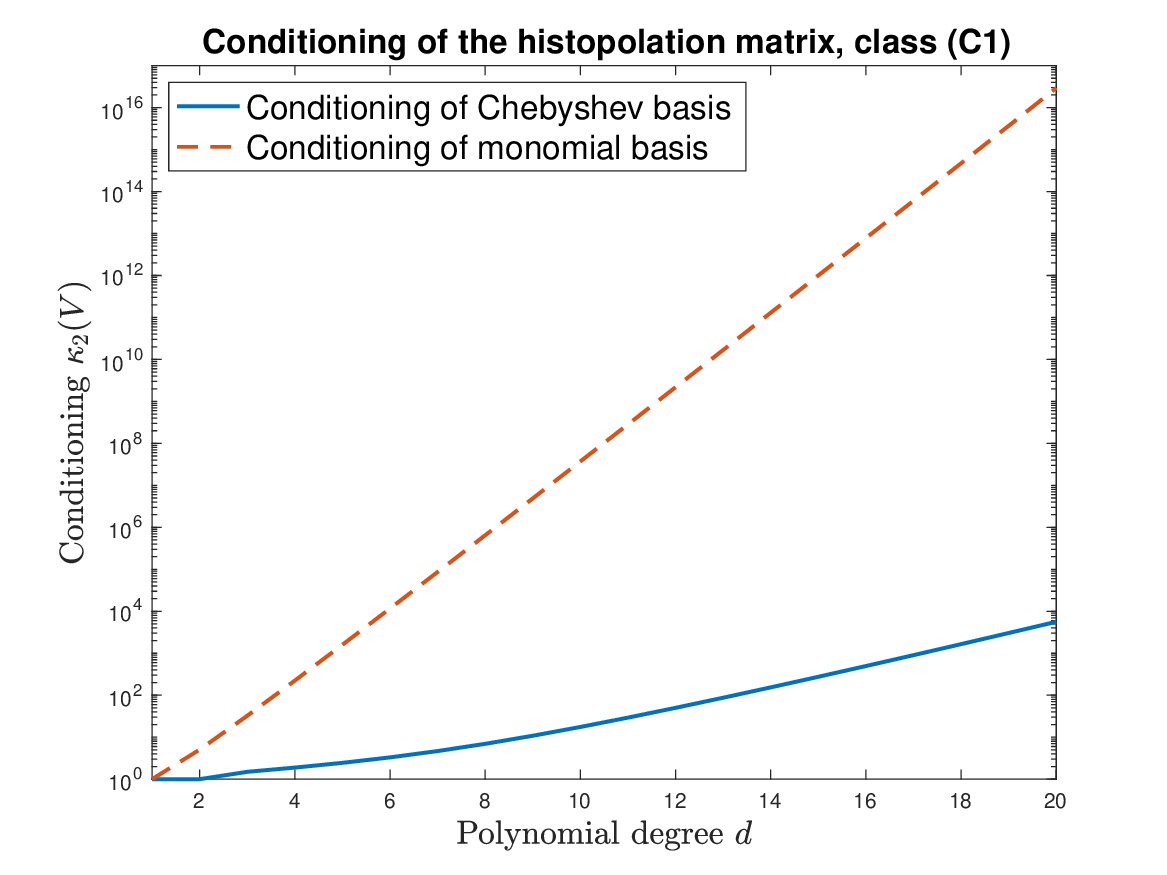}
	\includegraphics[width=0.49\textwidth]{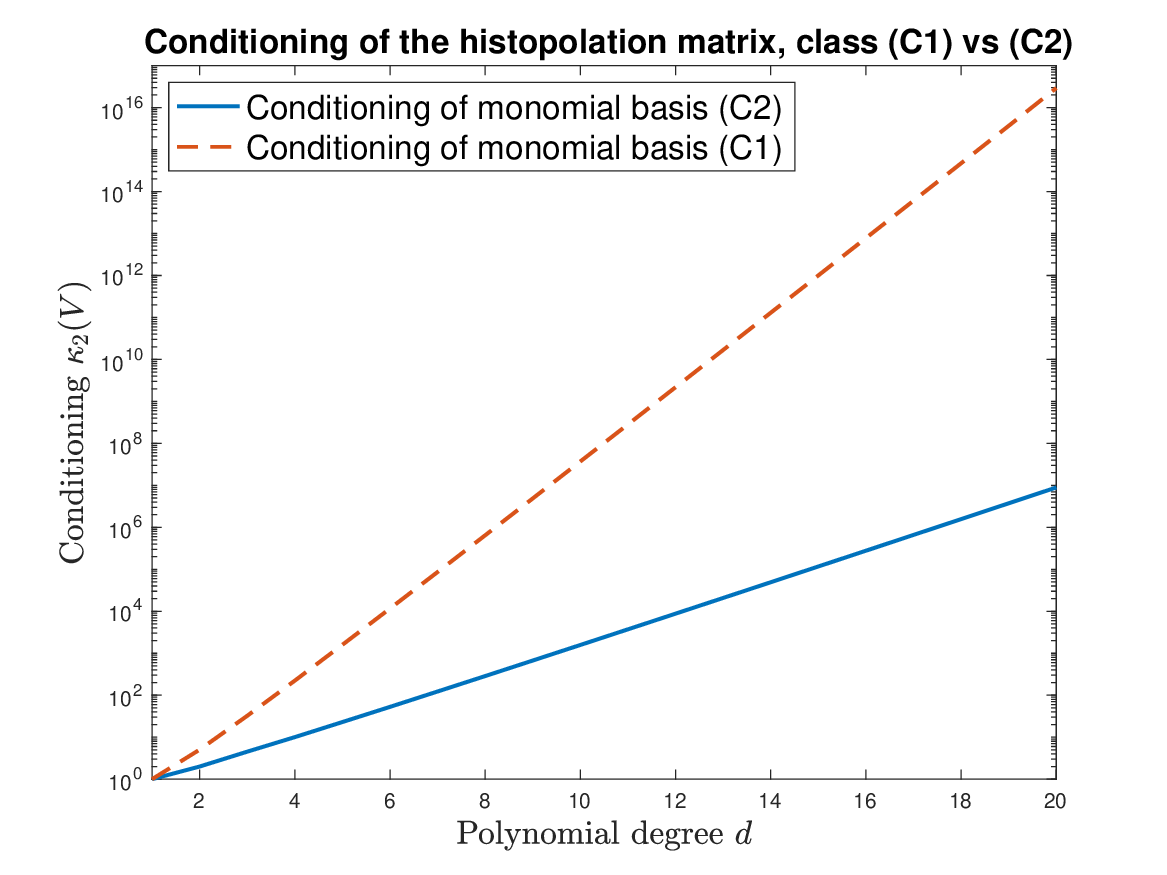}
	\caption{Left: conditioning of the Chebyshev basis for segments in the class (C1). Right: conditioning of the monomial basis for segments in class (C2), compared with segments in class (C1).} \label{fig:wrongbases}
\end{figure}

\subsubsection{The normalized case}

For the class (C2), the normalization factor $|s_i| = | \cos(\tau_i - \rho) - \cos(\tau_i + \rho)|  = 2 | \sin (\tau_i) \sin (\rho) | $ turns out to be useful in the computation of the Lebesgue constants performed in \cite{BEsinum}. Supposing that $ \tau_i $ and $ \rho $ satisfy the hypotheses of the above section, so that $ \sin (\tau_i ) > 0 $ and $ \sin (\rho) > 0 $, the corresponding Vandermonde matrix is
\begin{equation} \label{eq:vandermondeC2norm}
	\left[\widetilde V_d \right]_{i,j} = \frac{\frac{2}{j} \sin (j \tau_i) \sin (j \rho)}{2 \sin (\tau_i) \sin (\rho)} = \frac{\sin (j \tau_i) \sin (j \rho)}{j \sin (\tau_i) \sin (\rho)}.
\end{equation}
Let us observe that $ \widetilde{V}_d = S_d V_d $, where $ S_d $ is the diagonal matrix
    $$ [S_d]_{i,i} = \frac{1}{2 \sin (\tau_i) \sin (\rho)} $$
    and $ V_d $ is the non-normalized matrix \eqref{eq:VandermondeU}. Using Binet's Theorem and Corollary \ref{cor:detC2}, we may hence provide a closed formula for $ \det \widetilde V_d $.

\begin{proposition} \label{prop:detVtilded}
    Let $ 0 < \rho < \pi/d $. One has
    $$ \det \widetilde V_d = \frac{\det V_d}{2 \sin^d (\rho)}, $$
    where a formula for the modulus of $ \det V_d $ is given in \eqref{eq:detVdCheby}, and the sign is $ (-1)^{\frac{d(d-1)}{2}} $.
\end{proposition}

\begin{proof}
    It is sufficient to compute $ \det S_d = \left( \prod_{i=1}^d 2 \sin (\tau_i) \sin (\rho) \right)^{-1} $. For, we compute
    \begin{align*}
        \prod_{i=1}^d 2 \sin (\tau_i) \sin (\rho) = \left(\prod_{i=1}^d 2\right) \left(\prod_{i=1}^d \sin (\tau_i) \right) \left(\prod_{i=1}^d \sin (\rho) \right) = 2^d \sin^d (\rho)  \left(\prod_{i=1}^d \sin (\tau_i) \right) .
    \end{align*}
    Applying the product formula $ \sin (d x) = 2^{d-1} \prod_{i=0}^{d-1} \sin \left( \pi i/d + x \right) $ to Chebyshev nodes, and shifting indices, one readily deduces that $\prod_{i=1}^d \sin (\tau_i) = 2^{1-d} $, whence
    $$ \det S_d = \frac{1}{2^d 2^{1-d} \sin^d (\rho)} = \frac{1}{2 \sin^d (\rho)} .$$
    Binet's Theorem immediately implies the claim.
\end{proof}

When $ \rho = \pi/(2d) $, one may provide an asymptotics for $ \det \widetilde V_d $. It follows immediately from Corollary \ref{cor:detC2} and the limit $ \lim_{d \to \infty} \sin^d (\pi/(2d)) = (\pi/(2d))^d $.

\begin{corollary}
    Let $ \rho = \pi/(2d) $. One has
    $$ \lim_{d \to \infty} \left| \det \widetilde V_d \right| = \frac{\sqrt{d^{3d+1} 2^{d-4}}}{d! \pi^d} .$$
\end{corollary}
%

Of course, due to the additional left-multiplication by $ S_d $, $ \widetilde V_d $ does not satisfy anymore the orthogonality condition of Proposition \ref{prop:orthogonality}. Nevertheless, the conditioning $ \kappa_2 (\widetilde V_d) $ does not grow faster than linearly.

\begin{proposition} \label{prop:normCheby}
    For every $ \rho \in (0, \pi/d) $, one has
    $$ \lim \kappa_2 (\widetilde V_d) \leq \frac{2 \kappa_2 (V_d)}{\pi} d .$$
\end{proposition}

\begin{proof}
    Let us write $ \widetilde V_d = S_d V_d $ and analyze $ \kappa_2 (S_d) $. Clearly, factorizing the term $ 2 \sin (\rho) $, one finds
    $$ \kappa_2 (S_d) = \frac{\max_{i} \sin (\tau_i)}{\min_i \sin (\tau_i)} \leq \frac{1}{\min_i \sin (\tau_i)} .$$
    The minimum of $ \sin (\tau_i) $ is reached for $ \tau_1 = \pi/(2d) $, the closest to zero Chebyshev node, and for $ \tau_d = (2d-1)\pi/(2d) $. Considering $ \tau_1 $, as $ d \to \infty$, one has $ \sin (\pi/2d) \approx \pi / 2d $, which implies that 
    $$ \kappa_2 (S_d) \leq \frac{2d}{\pi} .$$
    Since $ \kappa_2 (\widetilde{V}_d) \leq \kappa_2 (S_d) \kappa_2 (V_d) $, the result immediately follows from Theorem \ref{thm:asymptcond}.
\end{proof}

Notice that, for $ \rho = \frac{\pi}{2d} $, which corresponds to the parameter $ a = 1/2 $, the upper bound established by Proposition \ref{prop:normCheby} simply becomes $ \kappa_2 (\widetilde V_d) \leq d $. The behavior of the conditioning is confirmed by the numerical trend depicted in Figure \ref{fig:condexampleC2Norm}.

\begin{figure}[h!]
	\centering
	\includegraphics[width=0.75\textwidth]{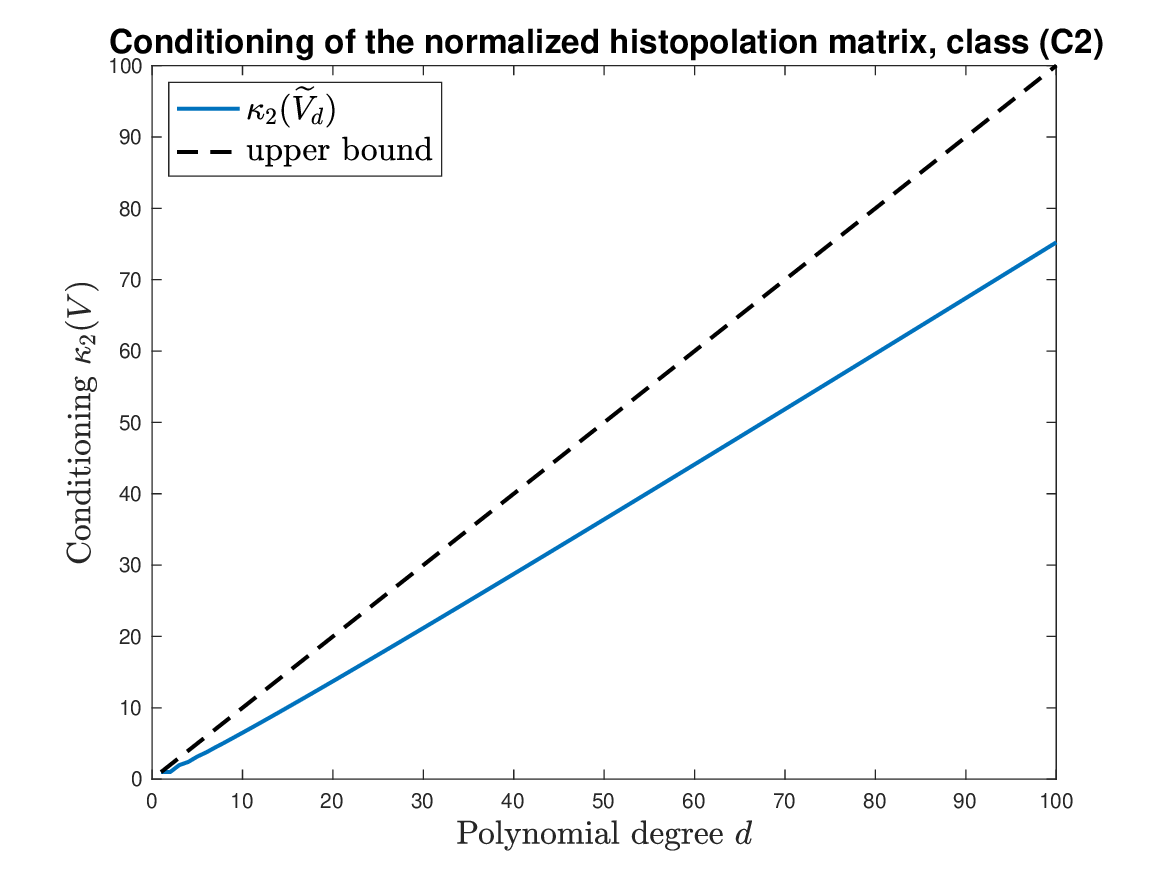}
	\caption{Linear conditioning of the Vandermonde matrix in the Chebyshev basis for the class (C2). The parameter is $ a = 1 / 2 $, so $ \kappa_2 (\widetilde V_d) \leq d $. The resulting segments also belong to the class (C1).} \label{fig:condexampleC2Norm}
\end{figure}

\subsection{Conditioning with respect to the Frobenius norm}

Let us recall that, for a matrix $ A \in \R^{m \times n}$, its \emph{Forbenius norm} is the quantity
$$ \Vert A \Vert_{F} := \sqrt{\sum_{i=1}^m \sum_{i=1}^n | A_{i,j} |^2 } = \sqrt{\mathrm{Tr} (A^\top A)} $$
and the corresponding condition number is thus
\begin{equation} \label{eq:Frobcond}
	\kappa_F (A) := \Vert A \Vert_F \Vert A^{-1} \Vert_F .
\end{equation}
The quantity $ \kappa_F (A) $ reflects some geometrical features of the matrix $ A $ \cite{Chehab}, and in the context of nodal interpolation with Chebyshev nodes it has been studied in \cite{Eisinberg}. 
The relationship between $ \kappa_2 (A) $ and $ \kappa_F (A) $ is expressed by the inequality
$$ \kappa_F (A) \geq \kappa_2 (A) + \frac{1}{\kappa_2 (A)} + m - 2 ,$$
see \cite{Bazan}, which implies that the ill-conditioning associated with the monomial basis and proved in Theorem \ref{th:cheby tool2} immediately carries over to the Frobenius setting.

\subsubsection{The Frobenius conditioning for Chebyshev segments}
Proposition \ref{prop:orthogonality} and subsequent computations allow to deduce results beyond the conditioning in the Euclidean norm for the histopolation matrix $ V_d $. 
First, observe that the Cauchy-Schwarz inequality applied to \eqref{eq:Frobcond} gives
\begin{equation} \label{eq:FrobeniusLB}
    \kappa_F (V_d) = \Vert V_d \Vert_F \Vert V_d^{-1} \Vert_F = \sqrt{\sum_{i=1}^d \sigma_i^2 (V_d)} \sqrt{\sum_{i=1}^d \frac{1}{\sigma_i^2 (V_d)}} \geq \sum_{i=1}^d 1 = d ,
\end{equation}
which yields a linear lower bound for $ \kappa_F ( V_d) $. For large $ d $, we are also in a position to provide an asymptotic upper bound for $ \kappa_F (V_d) $. To this end, we shall need the integral sine function
$$ \mathrm{Si} (z) := \int_0^z \frac{\sin (t)}{t} \de t .$$

\begin{lemma} \label{lem:Frobnorm}
Let $ 0 < a < 1 $. One has
	\begin{equation} \label{eq:boundVF}
		\lim_{d \to \infty} \Vert V_d \Vert_F = \sqrt{2a \pi \mathrm{Si} (2a \pi) - 2 \sin^2 (a \pi)} .
	\end{equation}
\end{lemma}

\begin{proof}
	First of all, recall that $ \Vert V_d \Vert_{F}^2 = \mathrm{Tr} \left( V_d^\top V_d \right) $. Hence, by Proposition \ref{prop:orthogonality}, substituting $ \rho = a \pi /d $, we get
	$$ \Vert V_d \Vert_{F}^2 = \mathrm{Tr} \left(V_d^\top V_d\right) = \sum_{i=1}^{d-1} \frac{2d}{i^2} \sin^2 (i \rho) + \frac{4}{d} \sin^2 (d \rho)  = \sum_{i=1}^{d} \frac{2d}{i^2} \sin^2 \left( i \frac{a}{d} \pi \right) + \frac{2}{d} \sin^2 (a \pi). $$
	The terms of this sum are positive and decreasing to $ 0 $, and the sum converges due to the presence of the term $ i^2 $. We may hence interpret it as a Riemann integral where $ \Delta x := 1/d $ and $ x_i := i/d $ and passing to the limit for $ d \to \infty $. Thus, one has
	\begin{align*} 
		& \lim_{d \to + \infty} \left( \sum_{i=1}^{d} \frac{2d}{i^2} \sin^2 \left( i \frac{a}{d} \pi \right) + \frac{2}{d} \sin^2 (a \pi) \right)
		=  \lim_{d \to + \infty} \sum_{i=1}^d \frac{2d}{i^2} \sin^2 \left( a \frac{i}{d} \pi \right) \\
        = & \lim_{d \to + \infty} \sum_{i=1}^d \frac{2d^2}{i^2 d} \sin^2 \left( a \frac{i}{d} \pi
        \right) 
		= \lim_{d \to + \infty} 2 \sum_{i=1}^d \frac{1}{x_i^2} \sin^2 \left( a \pi x_i \right) \Delta x = 2 \int_{0}^1 \frac{\sin^2 \left( a \pi x \right)}{x^2} \de x .
	\end{align*}
	This integral can be expressed using the sine integral function $ \mathrm{Si} (z) $, and after the immediate change of variable $ t = a \pi x $ and applying integration by parts, one obtains
	\begin{align*} 2 \int_{0}^1 \frac{\sin^2 \left( a \pi x\right)}{x^2} \de x & = 2 a \pi \int_0^{a\pi } \frac{\sin^2 (t)}{t^2} \de t = 2 a\pi \left( \mathrm{Si} (2a \pi) - \frac{\sin^{2} ( a \pi)}{a \pi} \right) \\
	& = 2a \pi \mathrm{Si} (2a \pi) - 2 \sin^2 (a \pi).
	\end{align*}
	Taking the square root we get the claim.
\end{proof}

The quantity \eqref{eq:boundVF} can be numerically estimated, thus Lemma \ref{lem:Frobnorm} can concretely be used to describe the asymptotics of the first term of the right hand side of \eqref{eq:Frobcond}. Further, due to the monotonicity as a function of $ a $ for $ 0 < a < 1 $, one concludes that 
$$ \lim_{d \to \infty} \Vert V_d \Vert_F \leq \sqrt{2 \pi \mathrm{Si} (\pi)} ,$$ obtaining an upper bound for $ \Vert V_d \Vert_F $ which is independent of $ a $.

To obtain an upper bound for $ \Vert V_d^{-1} \Vert_F $ and hence an estimate for the remaining term of \eqref{eq:Frobcond}, we exploit the explicit expression for $ V_d^{-1} $ derived in Section \ref{sect:inverseC2}.

\begin{lemma} \label{lem:Frobnorminverse}
    Let $ 0 < \rho < \pi/d $. One has
    $$ \left\Vert V_d^{-1} \right\Vert_F = \sqrt{\frac{1}{2d} \sum_{i=1}^{d-1} \left( \frac{i}{\sin (i \rho)} \right)^2 + \frac{d}{4 \sin^2 (d \rho)}} ,$$
    whence, for $ 0 < \rho  < \pi / d $,
    \begin{equation}
        \left\Vert V_d^{-1} \right\Vert_F \leq \frac{d}{\sqrt{2} \sin (d \rho)} = \frac{d}{\sqrt{2} \sin (a \pi)} .
    \end{equation}
\end{lemma}

\begin{proof}
    By applying the definition of Frobenius norm, and working with squared quantities,
    \begin{align*}
    \left\Vert V_d^{-1} \right\Vert_F^2 = \sum_{i=1}^d \sum_{j=1}^d \left[ V_{i,j}^{-1} \right]^2 = \sum_{i=1}^{d-1} \sum_{j=1}^d \left[ V_{i,j}^{-1} \right]^2 + \sum_{j=1}^d \left[ V_{d,j}^{-1} \right]^2
    \end{align*}
    whence, plugging the explicit terms given in Section \ref{sect:inverseC2}, one finds
    \begin{align*}
    \left\Vert V_d^{-1} \right\Vert_F^2 & = \sum_{i=1}^{d-1} \sum_{j=1}^d \left( \frac{i \sin (i \tau_j)}{d \sin (i \rho)} \right)^2 + \sum_{j=1}^d \left( \frac{\sin (d \tau_j)}{2 \sin (d \rho)} \right)^2 \\
    & = \sum_{i=1}^{d-1} \frac{i^2}{d^2 \sin^2(i \rho)} \sum_{j=1}^d \sin^2 (i \tau_j) + \frac{1}{4 \sin^2 (d \rho)} \sum_{j=1}^d \sin^2 (d \tau_j) .
    \end{align*}
    Both sums depending on $ j $ have been computed in \eqref{eq:orthsine}. In particular, the first equals $ d/2 $, while the second equals $ d $. Substistuting, this gives
    $$ \left\Vert V_d^{-1} \right\Vert_F^2 = \sum_{i=1}^{d-1} \frac{i^2}{d^2 \sin^2(i \rho)} \frac{d}{2} + \frac{1}{4 \sin^2 (d \rho)} d = \frac{1}{2d} \sum_{i=1}^{d-1} \frac{i^2}{\sin^2(i \rho)} + \frac{d}{4 \sin^2 (d \rho)} .$$
    Taking the square root one has the first equality. Exploiting the concavity of the sine function in $ (0, \pi) $, Lemma \ref{lem:decreasing} extends to $ j = d $ and we get that $ \sin (x) / x \geq \sin (d \rho) / (d \rho) $ in $ (0, d \rho) $, whence
    $$ \frac{i}{\sin (i \rho)} \leq \frac{d}{\sin (d \rho)} $$
    for all the terms in the remaining sum. We may thus expand the above equation to get
    \begin{align*}
        \left\Vert V_d^{-1} \right\Vert_F^2 & = \frac{1}{2d} \sum_{i=1}^{d-1} \frac{i^2}{\sin^2(i \rho)} + \frac{d}{4 \sin^2 (d \rho)} \leq \frac{1}{2d} \sum_{i=1}^{d-1} \frac{d^2}{\sin^2 (d \rho)} + \frac{d}{4 \sin^2 (d \rho)} \\
        & = \frac{d(d-1)}{2 \sin^2 (d \rho)} + \frac{d}{4 \sin^2 (d \rho)} = \frac{2d^2 - d}{4 \sin^2 (d \rho)} \leq \frac{d^2}{2 \sin^{2} ( d \rho) } .
    \end{align*}
    Taking the square root and substituting $ \rho = a \pi /d $, one also obtains $ \left\Vert V_d^{-1} \right\Vert_F \leq \frac{d}{\sqrt{2} \sin (a \pi)} $. 
\end{proof}

Merging Lemma \ref{lem:Frobnorm} and Lemma \ref{lem:Frobnorminverse} with the general lower bound \eqref{eq:FrobeniusLB}, we readily obtain the main result of the section.

\begin{theorem} \label{thm:condFrobenius}
	The conditioning $ \kappa_F (V_d) $ grows asymptotically linearly with $ d $. In particular, for $ 0 < a < 1 $, one has
    \begin{equation} \label{eq:linearFrobthm}
    d \leq \kappa_F (V_d) \leq \frac{\sqrt{a \pi \mathrm{Si}(2 a \pi) - \sin^2 (a \pi)}}{\sin(a \pi)} d .
    \end{equation}
\end{theorem}
%
%

Figure \ref{fig:condFro} depicts the trend of the Frobenius conditioning, predicted by multiplying the bounds given in Lemma \ref{lem:Frobnorm} and Lemma \ref{lem:Frobnorminverse}, which appears indeed linear. For small $ a $, by performing a Taylor expansion, one sees that the right hand side of \eqref{eq:linearFrobthm} is indeed close to $ d $. 

%
\begin{figure}[h!]
	\centering
    \includegraphics[width=0.7\textwidth]{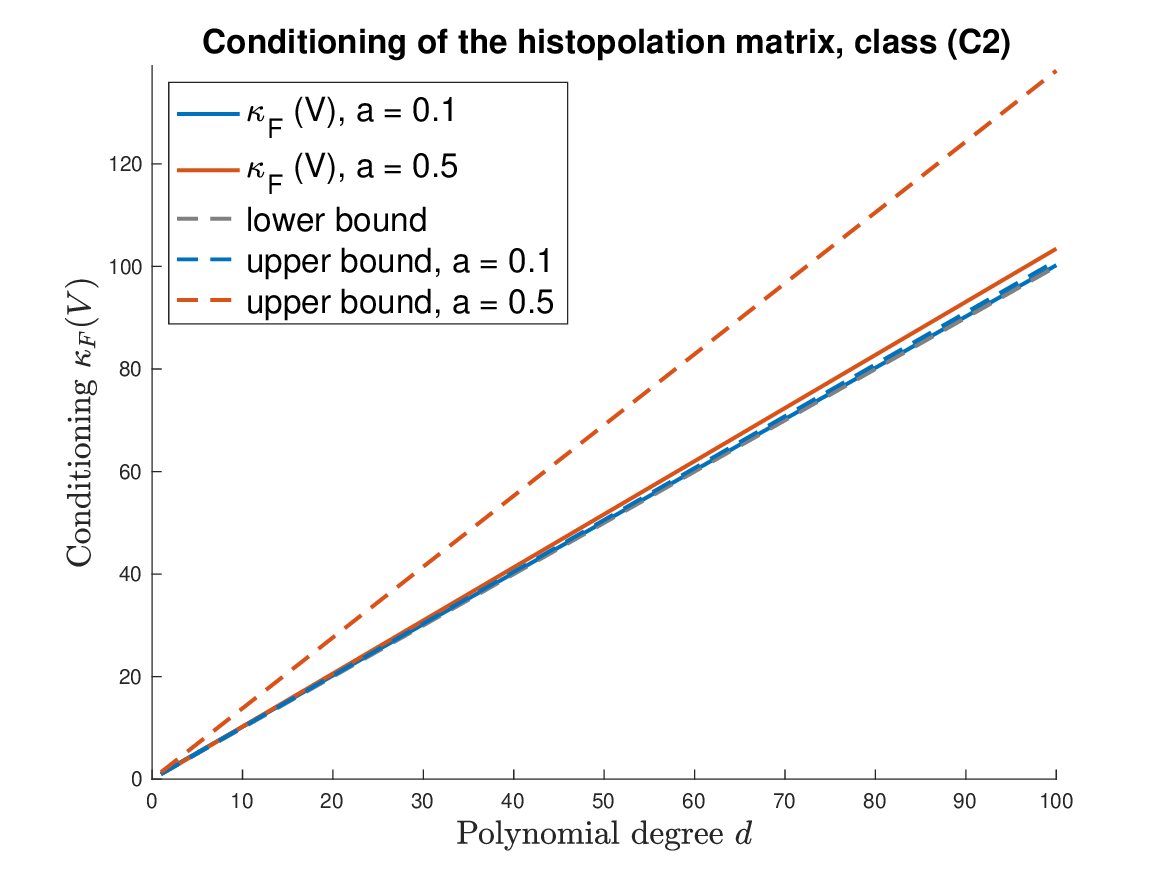}
	\caption{Linear conditioning in the Frobenius norm of the Vandermonde matrix in the Chebyshev basis for  $ a = 0.1 $ and $ a = 0.5 $, with the respective upper bounds.} \label{fig:condFro}
\end{figure}

\section*{Conclusions and further directions}

In the present paper, we have studied the conditioning of Vandermonde matrices $ V_d $ arising in histopolation. We focused on the Euclidean conditioning $  \kappa_2 (V_d) $, and we were able to identify two antipodal behaviours: the first one is exponential with $ d $, and is peculiar to the monomial basis. While this basis is somehow sensitive to the underlying collection of supports, the general trend is indeed exponential, and hence gives a neat example of unifying exponential ill-conditioning, in all the $\|\cdot\|_{l^k\rightarrow l^p}$ norms, $k,p\in [1,\infty]$ (the case $k=p=2$ being the standard Euclidean or spectral norm). On the other hand, we recognized that the use of the Chebyshev basis can mitigate this effect; in particular, in combination with a particular class of segments, the conditioning drops and the Vandermonde matrix becomes very well-conditioned. As a by-product, owing to a discrete orthogonality, we also proved that this latter situation is favourable also with respect to the conditioning in the Frobenius norm $ \kappa_F(V_d) $.

Although the conditioning with respect to the Euclidean norm is the most common in numerical analysis, it is important to point out that it is not the only one. For instance, the conditioning with respect to the $ \infty $-norm is usually taken into account when dealing with Bernstein-Vandermonde collocation matrices \cite{Delgado}, whose numerical features are not only of theoretical interest, but also have several applications \cite{Allen}. Thus, a natural extension to this work consists in considering the Bernstein basis (fixing the reference interval to $ I = [-1,1] $) and analyzing the relative conditioning in the $ \infty $-norm. A well-established and interesting starting point for a  worthwhile comparison with the literature is represented by the work \cite{Farouki}.


\end{document}